\documentclass[11pt]{amsart}

\usepackage{amsmath, amssymb, amsthm}
\usepackage{mathtools}
\usepackage{tikz-cd}
\usepackage{hyperref}
\usepackage{cleveref}
\usepackage{enumitem}
\usepackage[margin=1in]{geometry}

\newtheorem{maintheorem}{Theorem}

\theoremstyle{plain}
\newtheorem{theorem}{Theorem}[section]
\newtheorem{proposition}[theorem]{Proposition}
\newtheorem{lemma}[theorem]{Lemma}
\newtheorem{corollary}[theorem]{Corollary}

\theoremstyle{definition}
\newtheorem{definition}[theorem]{Definition}
\newtheorem{example}[theorem]{Example}
\newtheorem{remark}[theorem]{Remark}
\newtheorem{convention}[theorem]{Convention}

\newcommand{\Spec}{\mathrm{Spec}}
\newcommand{\Hom}{\mathrm{Hom}}
\newcommand{\Set}{\mathbf{Set}}
\newcommand{\Ab}{\mathbf{Ab}}
\newcommand{\CRing}{\mathbf{CRing}}
\newcommand{\CMonoid}{\mathbf{CMonoid}}
\newcommand{\CSemiring}{\mathbf{CSemiring}}
\newcommand{\GSet}{\Gamma\mathbf{Set}_*}
\newcommand{\FMod}{\mathbb{F}_1\mathbf{Mod}}
\newcommand{\FAlg}{\mathbb{F}_1\mathbf{Alg}}
\newcommand{\FF}{\mathbb{F}_1}
\newcommand{\ZZ}{\mathbb{Z}}

\newcommand{\RR}{\mathbb{R}}

\hypersetup{
    colorlinks=true,
    linkcolor=blue,
    citecolor=blue,
    urlcolor=blue
}

\begin{document}

\title{Hyper-Operations and Extension of Scalars from $\mathbb{F}_1$ to $\mathbb{Z}$}

\author{Luqiao Xu}
\address{[Johns Hopkins University]}
\email{[lxu46@jhu.edu]}

\begin{abstract}
The additive structure of $\mathbb{F}_1$-modules (in the sense of Segal's $\Gamma$-sets) differs fundamentally from that of abelian groups: addition is encoded through a family of $n$-ary hyper-operations that are multivalued and do not satisfy classical associativity. We establish a \emph{law of generalized associativity} showing that, despite this failure of strict associativity, all $n$-ary sums are controlled by successive binary operations. This enables us to construct an extension of scalars functor $-\otimes_{\FF} \ZZ: \FMod \to \Ab$ that universally strictifies the hyper-additive structure of $\mathbb{F}_1$-modules into classical abelian group addition. We prove this functor is left adjoint to the Eilenberg-MacLane functor $H: \Ab \to \FMod$. Extending to the multiplicative setting, we obtain an adjunction $-\otimes_{\FF} \ZZ: \FAlg \leftrightarrows \CRing : H$ between commutative $\mathbb{F}_1$-algebras and commutative rings. This recovers Deitmar's monoid ring construction for spherical monoid algebras and provides a base change mechanism needed for absolute algebraic geometry.
\end{abstract}
\maketitle

\tableofcontents

\section{Introduction}

\subsection{Motivation: Hyperfields and the field with one element}
The notion of a ``field with one element,'' denoted $\FF$, originates in Tits' work on spherical buildings, where certain combinatorial formulas suggested the existence of geometric objects ``over $\FF$''. While $\FF$ cannot exist as a classical field, various frameworks have been proposed to give it mathematical meaning, including Deitmar's monoid schemes \cite{deitmar2006schemesf1}, To\"en-Vaqui\'e's relative schemes \cite{toen}, Lorscheid's blueprints \cite{MR2777715}, Connes-Consani's $\mathbb{S}$-algebras \cite{connes2019absolute, connes}, and Borger's $\lambda$-rings \cite{borger2009lambdaringsfieldelement}.

One particular challenge in developing $\mathbb{F}_1$-geometry is understanding its ``additive'' structure. Deitmar's approach, by  considering monoids, effectively neglects the existence of addition altogether, whereas Lorscheid's blueprints involve partially defined addition, incorporating the monoid case. In this paper, we propose the model of $\mathbb{F}_1$-modules as \emph{$\Gamma$-sets}, following Connes-Consani's framework \cite{connes2019absolute}. This allows for even more possibilities, namely multiply defined addition, or hyper-operation.

In hyperstructure theory \cite{krasner, hyper}, the notion of binary operation is generalized to a \emph{hyper-operation}: a map $\oplus  : H \times H \to \mathcal{P}(H)$ assigning to each pair of elements not a unique sum, but a \emph{set} of possible sums. This accounts for the possibility of an empty sum, the classic one-element sum, or a multiply defined sum, and is naturally linked to $\mathbb{F}_1$-geometry.

However, classical hyperfields have certain limitations: they capture only \emph{binary} hyper-operation and impose axioms (existence of additive inverses, distributivity, associativity) that don't extend naturally beyond fields. To overcome this deficiency, we study \emph{$n$-ary} hyper-operations in the most general sense, and deal with the problem of associativity in $\mathbb{F}_1$-modules.

\subsection{Main results}

Our first main result establishes the coherence property that controls all hyper-operations:

\begin{maintheorem}[Law of Generalized Associativity]\label{thm:main-A}
Let $X$ be an $\mathbb{F}_1$-module, let $I = \{1,\ldots,n\}$, and let $\{I_j\}_{j=1}^m$ be a partition of $I$. For any collection of subsets $\{A_i\}_{i\in I}$ of $X(1_+)$, we have
$$\bigoplus_{i \in I} A_i \subseteq \bigoplus_{j=1}^m \left(\bigoplus_{i \in I_j} A_i\right).$$
\end{maintheorem}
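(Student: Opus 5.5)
The plan is to unwind the definition of the $n$-ary hyper-sum in terms of the $\Gamma$-set structure and produce, from a single witness for the left-hand side, a witness for the right-hand side. Throughout, I take the convention, which I expect matches the paper's, that for $x_1,\dots,x_n\in X(1_+)$ the hyper-sum is
\[
\bigoplus_{i\in I} x_i=\{\,X(\mu_n)(w) : w\in X(n_+),\ X(\rho_i)(w)=x_i \text{ for all } i\,\}.
\]
Here $\rho_i\colon n_+\to 1_+$ sends $i\mapsto 1$ and everything else to the basepoint, and $\mu_n\colon n_+\to 1_+$ sends every non-basepoint to $1$. For subsets $A_i$ the hyper-sum is the union of $\bigoplus x_i$ over all choices $x_i\in A_i$. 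Since both sides are unions over choices, it suffices to fix $z\in\bigoplus_i A_i$ together with a witness $w\in X(n_+)$, so that $X(\rho_i)(w)=a_i\in A_i$ and $X(\mu_n)(w)=z$.

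\textbf{Step 1 (the coarse witness).} Let $q\colon n_+\to m_+$ be the pointed map sending $i\mapsto j$ whenever $i\in I_j$. Put $v=X(q)(w)\in X(m_+)$ and $y_j=X(\rho_j)(v)=X(\rho_j\circ q)(w)$. Since $\mu_m\circ q=\mu_n$, functoriality gives $X(\mu_m)(v)=z$. Hence $z\in\bigoplus_{j=1}^m y_j$, with $v$ as the witness.

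\textbf{Step 2 (each partial sum is realized).} It remains to check that $y_j\in\bigoplus_{i\in I_j}A_i$. Let $r_j\colon n_+\to (I_j)_+$ be the map that is the identity on $I_j$ and collapses the complement of $I_j$ to the basepoint, and set $w_j=X(r_j)(w)$. Two identities of pointed maps do the work:
\begin{itemize}
\item For $i\in I_j$, we have $\rho_i\circ r_j=\rho_i$ (with $\rho_i$ on the left taken on $(I_j)_+$). Hence $X(\rho_i)(w_j)=a_i$.
\item We have $\mu_{I_j}\circ r_j=\rho_j\circ q$, since both send exactly $I_j$ to $1$. Hence $X(\mu_{I_j})(w_j)=y_j$.
\end{itemize}
So $w_j$ witnesses $y_j\in\bigoplus_{i\in I_j}a_i\subseteq\bigoplus_{i\in I_j}A_i$. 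Combining this with Step 1 yields $z$ in the right-hand side.

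\textbf{Main obstacle.} The only delicate point is bookkeeping: matching the paper's exact definition of the hyper-sum, including how subsets are handled and how $I_j$ is identified with a standard $k_+$. This requires choosing order-preserving bijections so that the $\rho_i$ and $\mu$ maps line up. Degenerate cases also need checking. If some $I_j$ were empty, $y_j$ would be the basepoint, which is the empty sum; and if some $A_i$ is empty, the left side is empty and the inclusion is trivial. Once the maps $q$ and $r_j$ are fixed, every verification is a composition identity of pointed finite-set maps followed by functoriality of $X$.
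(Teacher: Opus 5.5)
Your proof is correct and is essentially the paper's argument: your $q$ is the paper's partition morphism $f$, your $v = X(q)(w)$ is its $y = X(f)(z)$, and your $r_j$ (with $I_j$ identified with $\{1,\ldots,n_j\}$ by an order-preserving bijection) is the paper's $\tilde{\phi_j}$. The two composition identities $s_{n_j}\circ\tilde{\phi_j} = p_{j,m}\circ f$ and $p_{\ell,n_j}\circ\tilde{\phi_j} = p_{i,n}$ do the same work in both, and your handling of degenerate cases (empty parts, empty $A_i$) is extra care the paper omits.
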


This shows that $n$-ary sums can be ``computed'' through successive operations over any partition. Although strict equality does not hold, the inclusion reveals that all information about $n$-ary sums is \emph{controlled by} lower-order operations. In particular, all $n$-ary sums are controlled by successive binary operations, for example
$$ a \oplus b \oplus c \subseteq  (a \oplus b) \oplus c $$

Exploiting the Law of Generalized Associativity, we construct the extension of scalars functor:

\begin{maintheorem}[Extension of scalars for modules]\label{thm:main-B}
There exists a functor
$$-\otimes_{\FF} \ZZ : \FMod \to \Ab$$
left adjoint to the Eilenberg-MacLane functor $H : \Ab \to \FMod$. For any $\mathbb{F}_1$-module $X$ and abelian group $M$, we have
$$\Hom_{\FMod}(X, HM) \cong \Hom_{\Ab}(X \otimes_{\FF} \ZZ, M).$$
\end{maintheorem}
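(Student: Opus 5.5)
The plan is to write down the group directly by generators and relations, read off the adjunction from its universal property, and use Theorem~\ref{thm:main-A} only for well-definedness of the unit. An $\mathbb{F}_1$-module $X$ is a pointed functor $\Gamma^{op}\to\Set_*$. The $n$-ary hyper-operation is
$$\bigoplus_{i=1}^n a_i=\{x_1\in X(1_+) : \exists\, z\in X(n_+),\ X(\rho_i)(z)=a_i \text{ for all } i,\ X(\mu)(z)=x_1\},$$
where $\rho_i:n_+\to 1_+$ are the projections and $\mu:n_+\to 1_+$ is the fold map. Define
$$X\otimes_{\FF}\ZZ \;=\; \ZZ[X(1_+)]\big/\bigl\langle [*],\ [x]-\textstyle\sum_i [a_i] \;:\; x\in \bigoplus_i a_i\bigr\rangle,$$
the free abelian group on $X(1_+)$ modulo the basepoint and all hyper-sum relations. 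Since Theorem~\ref{thm:main-A} shows $n$-ary sums are contained in iterated binary sums, it suffices to impose only the binary relations; equivalently, the $n$-ary relations are consequences of the binary ones. Functoriality is clear: a natural transformation $f:X\to Y$ preserves hyper-sums, because naturality along $\rho_i$ and $\mu$ carries a witness $z$ to a witness $f_n(z)$. Hence $f$ induces a homomorphism of the quotients.

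For the adjunction I recall that $HM(n_+)=M^n$, with a map $\phi:n_+\to k_+$ acting by $(m_i)\mapsto\bigl(\sum_{\phi(i)=j}m_i\bigr)_j$. Given a morphism $f:X\to HM$, take its level-one component $f_1:X(1_+)\to M$. Naturality along $\rho_i$ and $\mu$ forces $f_1(x)=\sum_i f_1(a_i)$ whenever $x\in\bigoplus a_i$, and $f_1(*)=0$. So $f_1$ factors through $X\otimes_{\FF}\ZZ$.

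The substantive step is the converse: every homomorphism $g:X\otimes_{\FF}\ZZ\to M$ comes from a unique $f$. I would define
$$f_n(z)=\bigl(g[X(\rho_1)z],\dots,g[X(\rho_n)z]\bigr).$$
This choice is forced, since the composites $\rho_i\circ f_n$ must equal $f_1\circ X(\rho_i)$; that gives uniqueness, and shows $f$ is determined by $f_1$. The main obstacle is naturality for an arbitrary $\phi:n_+\to k_+$. For each $j$ one must show
$$g\bigl[X(\rho_j\phi)z\bigr]=\sum_{\phi(i)=j} g\bigl[X(\rho_i)z\bigr].$$
The map $\rho_j\circ\phi:n_+\to 1_+$ factors as a projection onto the coordinates in $\phi^{-1}(j)$ followed by a fold map. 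Set $w=X(\pi)z\in X(|\phi^{-1}(j)|_+)$, where $\pi$ is that projection. Then $w$ witnesses
$$X(\rho_j\phi)z\in\bigoplus_{i\in\phi^{-1}(j)}X(\rho_i)z,$$
and the defining relation gives the equation. I would take care with the degenerate cases, where the fiber is empty (the sum is then $\{*\}$, and $[*]=0$) or a singleton. With naturality established, the assignments $f\mapsto f_1$ and $g\mapsto f$ are mutually inverse and natural in $X$ and $M$. This yields $\Hom_{\FMod}(X,HM)\cong\Hom_{\Ab}(X\otimes_{\FF}\ZZ,M)$.
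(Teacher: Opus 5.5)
Your proof is correct, and its skeleton is the paper's. You use the same presentation by generators and relations and the same forced formula $f_n(z)=(g[X(\rho_i)z])_i$ to get uniqueness. Your naturality check also factors $\rho_j\circ\phi$ as a fold map after the map onto the fiber $\phi^{-1}(j)$, which is the paper's $p_j\circ\gamma=s_r\circ\gamma'$.

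The genuine difference is where Theorem~\ref{thm:main-A} enters. The paper imposes only binary relations, so at the naturality step it knows only that $g$ is binary-additive. It then needs Lemma~\ref{prop:higher-sums}, deduced from Corollary~\ref{cor:binary-iteration}, to conclude $g(c)=\sum_i g(a_i)$ for $c$ in an $n$-ary sum; generalized associativity is therefore load-bearing for its adjunction. You impose all $n$-ary relations, so your witness $w$ plugs directly into a defining relation, and the adjunction uses no associativity at all. Despite your opening sentence, that includes the unit $X\to H(X\otimes_{\FF}\ZZ)$. You need Theorem~\ref{thm:main-A} only to identify your group with the binary presentation of Definition~\ref{def:extension-modules}.

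Your route shows that existence of the left adjoint is essentially formal, and it isolates the content of Theorem~\ref{thm:main-A} as a statement about economy of presentation. It also handles the empty and singleton fibers explicitly, which the paper leaves implicit, since Lemma~\ref{prop:higher-sums} is stated only for $n\ge 2$. The paper's route, in exchange, directly yields the sharper Lemma~\ref{lem:characterization}, which characterizes maps $X\to HM$ by purely binary data. You recover that only after adding the reduction via Theorem~\ref{thm:main-A}.
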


Explicitly, we construct $X \otimes_{\FF} \ZZ$ as a quotient of the free abelian group $\ZZ[X(1_+)]$ by relations encoding the binary hyper-additive structure. By Theorem \ref{thm:main-A}, imposing relations for binary sums automatically ensures compatibility with all higher $n$-ary sums. The adjunction provides a universal ``strictification'' of hyper-addition into classical addition.

Extending to the multiplicative setting:

\begin{maintheorem}[Extension of scalars for algebras]\label{thm:main-C}
The functor $-\otimes_{\FF} \ZZ$ extends to commutative $\mathbb{F}_1$-algebras, yielding an adjunction
$$-\otimes_{\FF} \ZZ : \FAlg \leftrightarrows \CRing : H.$$
\end{maintheorem}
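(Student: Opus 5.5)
We need a plan for Theorem C: the extension of scalars to commutative $\FF$-algebras, giving an adjunction with $\CRing$. Here $\FAlg$ means commutative monoids in $(\GSet,\wedge)$, i.e. $\Gamma$-sets $A$ with a multiplication $A\wedge A\to A$ and unit $\mathbb{S}\to A$. The Eilenberg--MacLane functor on rings is $HR(k_+)=R^k$ with the induced product.

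The plan is to bootstrap from Theorem \ref{thm:main-B}, using the explicit model $X\otimes_{\FF}\ZZ=\ZZ[X(1_+)]/\sim$. First I will show that $-\otimes_{\FF}\ZZ$ is strong symmetric monoidal from $(\GSet,\wedge)$ to $(\Ab,\otimes)$, with unit $\mathbb{S}\otimes_{\FF}\ZZ\cong\ZZ$. Granting this, commutative monoids go to commutative monoids, so $A\otimes_{\FF}\ZZ$ is a commutative ring. I will also show that $H$ is lax symmetric monoidal, and that the unit and counit of the adjunction of Theorem \ref{thm:main-B} are monoidal natural transformations. Standard doctrinal adjunction then lifts the adjunction to commutative monoid objects, giving $\FAlg\leftrightarrows\CRing$.

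For the key monoidal step I would avoid abstract coend manipulations and argue via universal properties. By Theorem \ref{thm:main-B} and the universal property of the smash product, a map $(X\wedge Y)\otimes_{\FF}\ZZ\to M$ is the same as a map $X\wedge Y\to HM$. Such a map is a bimorphism $X\times Y\to HM$ of $\Gamma$-sets, meaning it is natural in each variable separately.

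I then want to identify bimorphisms with maps $X\otimes_{\FF}\ZZ\otimes Y\otimes_{\FF}\ZZ\to M$. To do so, I will evaluate at $1_+\wedge 1_+$, or more precisely use $(X\wedge Y)(1_+)$, which is generated by pairs $x\wedge y$. I will then check two things:
\begin{itemize}
\item the binary hyper-sum relations in each variable separately become bilinearity;
\item by Theorem \ref{thm:main-A}, no further relations arise.
\end{itemize}
I expect this to be the main obstacle. The smash product is a left Kan extension, so $(X\wedge Y)(1_+)$ contains elements coming from $X(k_+)\times Y(l_+)$ pushed along maps $k_+\wedge l_+\to 1_+$ that are not products of maps. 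Its binary hyper-sums therefore involve mixed terms. I would handle this by showing that every relation in $(X\wedge Y)\otimes_{\FF}\ZZ$ is a consequence of the relations $x\wedge y$ with $x\in x_1\oplus x_2$, and symmetrically in $y$. The proof would reduce a general element of $X(k_+)\wedge Y(l_+)$, via the partition inclusion of Theorem \ref{thm:main-A}, to iterated binary sums in each factor.

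Once the monoidal step is done, the rest should be direct. The ring structure on $A\otimes_{\FF}\ZZ$ is $[a][b]=[ab]$, with unit $[1]$. Compatibility of this product with the relations follows from distributivity of the multiplication $A\wedge A\to A$ over hyper-sums. For a ring $R$, a map of $\Gamma$-sets $f:A\to HR$ is multiplicative exactly when its transpose $\bar f:A\otimes_{\FF}\ZZ\to R$ satisfies $\bar f([a][b])=\bar f([a])\bar f([b])$ and $\bar f([1])=1$. These conditions need only be checked on the generators $[a]$, $a\in A(1_+)$, because a map into $HR$ is determined by its value at $1_+$. This gives the natural bijection $\Hom_{\FAlg}(A,HR)\cong\Hom_{\CRing}(A\otimes_{\FF}\ZZ,R)$. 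As a sanity check, for a spherical monoid algebra $\mathbb{S}[M]$ we should obtain $\ZZ[M]$, recovering Deitmar's construction.
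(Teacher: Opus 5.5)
Your proposal is correct, but it is organized differently from the paper. The paper never shows that $-\otimes_{\FF}\ZZ$ is monoidal. It defines $A\otimes_{\FF}\ZZ$ directly as the monoid ring $\ZZ[A(1_+)]$ modulo the \emph{ideal} generated by the basepoint and binary additivity relations. It then proves the algebra analogue of the level-one characterization (Lemma \ref{lem:characterization-algebra}): an algebra map $A\to HR$ is the same as a map $g:A(1_+)\to R$ that is pointed, additive, unital and multiplicative. Multiplicativity of the extended $\Gamma$-map is checked componentwise via $A(p_{i,j})(\mu_A(a,b))=A(p_i)(a)\cdot A(p_j)(b)$, and the adjunction then follows exactly as in the module case. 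Your route instead proves that $-\otimes_{\FF}\ZZ$ is strong symmetric monoidal and lifts the module adjunction to commutative monoids by doctrinal adjunction; your final paragraph essentially reproduces the paper's argument.

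What your approach buys is structural. It shows that the underlying group of the ring is the module-level $A\otimes_{\FF}\ZZ$, i.e.\ that the relation subgroup is already an ideal; the paper sidesteps this by quotienting by the generated ideal. The same formal argument also lifts the adjunction to noncommutative algebras and to modules over an algebra. The cost is that you must prove the monoidal isomorphism and its coherence, and check that the lax structure induced on $H$ is the usual product $(x_iy_j)$ on $HR$.

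The ``main obstacle'' you anticipate largely disappears if you stay with the Day-convolution universal property you already invoked. A map $X\wedge Y\to HM$ is a family $X(m_+)\wedge Y(n_+)\to M^{mn}$ natural in pairs of maps $(\gamma,\delta)$. Its $(i,j)$-component is forced to be $\phi_{1,1}(p_ix,p_jy)$, because the projection $p_{(i,j)}=p_i\wedge p_j$ is itself a product map. Conversely, any $\phi_{1,1}$ that is pointed and additive in each variable extends to such a family, by Lemma \ref{prop:higher-sums} applied in each variable separately. Yoneda then gives $(X\wedge Y)\otimes_{\FF}\ZZ\cong(X\otimes_{\FF}\ZZ)\otimes(Y\otimes_{\FF}\ZZ)$ without ever describing $(X\wedge Y)(1_+)$ or its mixed terms.
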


This construction gives the following satisfying results:
\begin{itemize}
\item For a commutative ring $R$: $(HR) \otimes_{\FF} \ZZ \cong R$ 
\item For a pointed monoid $M$: $(\mathbb{S}M) \otimes_{\FF} \ZZ \cong \ZZ[M]$ (integral monoid ring)
\end{itemize}

In particular, we have $\FF \otimes_{\FF} \ZZ \cong \ZZ$.

\subsection{Relation to existing work}

Our construction relates to several existing approaches to $\mathbb{F}_1$-geometry:

\emph{Deitmar's monoid schemes \cite{deitmar2006schemesf1}.} Deitmar defines $\mathbb{F}_1$-modules as (pointed) commutative monoids and constructs the extension of scalars functor via the integral monoid ring construction $M \mapsto \ZZ[M]$. Our functor generalizes this: for spherical monoid algebras $\mathbb{S}M$, we recover $(\mathbb{S}M) \otimes_{\FF} \ZZ \cong \ZZ[M]$, while for more general $\mathbb{F}_1$-modules this yields results utilizing the inherent additive structure.

\emph{Borger's $\lambda$-rings \cite{borger2009lambdaringsfieldelement}.} Borger interprets $\mathbb{F}_1$-algebras as $\lambda$-rings, with the Witt vector functor playing a central role, and the extension of scalars functor interpreted as the forgetful functor. We discuss the relationship in detail in Section \ref{subsec:comparison}.

\emph{Connes-Consani's absolute geometry \cite{connes2019absolute, connes}.} Our work provides algebraic foundations for their geometric program. They introduce $\Gamma$-sets as $\mathbb{F}_1$-modules but do not construct extension of scalars explicitly. Our results provide the base change mechanism needed for their theory of absolute affine schemes.

\emph{Hyperstructure theory \cite{hyper, MR2777715}.} Classical hyperfields use binary hyper-operations with axioms adapted from field theory. Our framework allows $n$-ary hyper-operations for all $n$ and reveals that the relevant coherence condition is generalized associativity rather than axioms like existence of additive inverses. This connects $\mathbb{F}_1$-geometry to the broader theory of hyperstructures.

\emph{Beardsley's plasmas \cite{beardsley2024}.} Beardsley and Nakamura propose plasmas (weakly unital commutative hypermagmas) to encode the hypersum structure in $\mathbb{F}_1$-modules, establishing an adjunction that captures information on the first two levels. Our approach could be seen as a further generalization: objects equipped with compatible families of $n$-ary hyper-operations satisfying generalized associativity. This suggests a hierarchy of increasingly structured models for $\mathbb{F}_1$-geometry.

\subsection{Outline}

Section \ref{sec:preliminaries} reviews $\Gamma$-sets as $\mathbb{F}_1$-modules, including the Eilenberg-MacLane and spherical monoid functors. Section \ref{sec:hyper-ops} defines $n$-ary sums, presents examples showing failure of strict associativity, and proves the law of generalized associativity (Theorem \ref{thm:main-A}). Section \ref{sec:extension-modules} constructs extension of scalars for $\mathbb{F}_1$-modules and establishes the adjunction (Theorem \ref{thm:main-B}). Section \ref{sec:algebras} extends to $\mathbb{F}_1$-algebras (Theorem \ref{thm:main-C}) and compares with existing frameworks. Section \ref{sec:applications} discusses geometric applications and future directions.

\subsection*{Acknowledgments}

I would like to thank Caterina Consani for guidance and Jonathan Beardsley for useful conversations. This paper is based on part of the author's Ph.D. thesis at Johns Hopkins University \cite{xu-thesis}.

\section{Preliminaries: $\Gamma$-sets and $\mathbb{F}_1$-modules}\label{sec:preliminaries}

In this section, we review the theory of Segal's $\Gamma$-sets following \cite{dundas2012local} and \cite{connes2019absolute}. We establish the basic functors and monoidal structure that will be used throughout the paper. The basic idea is that $\Gamma$-sets encode additive structure through their functoriality from the category $\Gamma^{\mathrm{op}}$ of pointed finite sets, while additional multiplicative structure can be imposed to obtain $\mathbb{F}_1$-algebras.

\subsection{The category $\Gamma^{\mathrm{op}}$}

\begin{definition}\label{def:gamma-op}
The category $\Gamma^{\mathrm{op}}$ has:
\begin{itemize}
\item Objects: Pointed finite sets $n_+ := \{0, 1, \ldots, n\}$ for each integer $n \geq 0$, where $0$ is the basepoint.
\item Morphisms: Pointed maps (i.e., maps preserving the basepoint $0$).
\end{itemize}
Equivalently, $\Gamma^{\mathrm{op}}$ is the skeleton of the category of finite pointed sets.
\end{definition}

\begin{definition}\label{def:gamma-set}
A \emph{$\Gamma$-set} is a pointed functor $F: \Gamma^{\mathrm{op}} \to \Set_*$ from $\Gamma^{\mathrm{op}}$ to the category of pointed sets. The pointed set $F(n_+)$ is called the \emph{$n$-th level} of $F$. We denote by $\GSet$ the category of $\Gamma$-sets, whose morphisms are natural transformations.
\end{definition}

\begin{remark}
In the language of Segal \cite{dundas2012local}, $\Gamma$-sets are $\Gamma$-spaces with discrete topology, where $\Gamma$-spaces provide a model for connective spectra. In the language of Connes-Consani \cite{connes2019absolute}, $\Gamma$-sets are $\mathbb{F}_1$-modules (or denoted as $\mathbb{S}$-modules), emphasizing their role as the fundamental objects in $\mathbb{F}_1$-geometry. This connection to homotopy theory is one reason $\Gamma$-sets provide a natural framework for absolute algebraic geometry.
\end{remark}

\subsection{The Eilenberg-MacLane functor}

The following construction shows how commutative monoids embed into $\Gamma$-sets, revealing that classical additive structures are special cases of the more general hyper-additive structures we will study.

\begin{definition}\label{def:eilenberg-maclane}
Let $M$ be a commutative monoid written additively with identity $0$. The \emph{Eilenberg-MacLane object} $HM$ is the $\Gamma$-set defined by:
$$HM(n_+) := M^{\oplus n} = \{(m_1, \ldots, m_n) : m_i \in M\}$$
with basepoint $(0, 0, \ldots, 0)$. For a morphism $f \in \Hom_{\Gamma^{\mathrm{op}}}(n_+, k_+)$, the induced map is:
$$HM(f): M^{\oplus n} \to M^{\oplus k}, \quad (m_1, \ldots, m_n) \mapsto \left(\sum_{f(i)=1} m_i, \ldots, \sum_{f(i)=k} m_i\right).$$
\end{definition}

To understand this construction, we examine the key morphisms in $\Gamma^{\mathrm{op}}$ that encode additive operations:

\begin{example}\label{ex:EM-morphisms}
Consider the following morphisms in $\Gamma^{\mathrm{op}}$ at level $n=2$:

\begin{enumerate}[label=(\arabic*)]
\item The \emph{sum morphism} $s \in \Hom_{\Gamma^{\mathrm{op}}}(2_+, 1_+)$ is defined by $s(0) = 0$, $s(1) = 1$, $s(2) = 1$. 

Diagrammatically:
\begin{center}
\begin{tikzcd}
0 & 1 \arrow[d] & 2 \arrow[dl] \\
0 & 1
\end{tikzcd}
\end{center}

where undrawn arrows are supposed to map to the basepoint $0$. 

For an Eilenberg-MacLane object $HM$, this induces:
$$HM(s): M \times M \to M, \quad (a,b) \mapsto a+b.$$

This is the usual addition in $M$.

\item The \emph{first projection} $p_1 \in \Hom_{\Gamma^{\mathrm{op}}}(2_+, 1_+)$ is defined by $p_1(0) = 0$, $p_1(1) = 1$, $p_1(2) = 0$.

Diagrammatically:
\begin{center}
\begin{tikzcd}
0 & 1 \arrow[d] & 2 \\
0 & 1
\end{tikzcd}
\end{center}

This induces $HM(p_1)(a,b) = a$.

\item The \emph{second projection} $p_2 \in \Hom_{\Gamma^{\mathrm{op}}}(2_+, 1_+)$ is defined by $p_2(0) = 0$, $p_2(1) = 0$, $p_2(2) = 1$.

Diagrammatically:
\begin{center}
\begin{tikzcd}
0 & 1 & 2 \arrow[dl] \\
0 & 1
\end{tikzcd}
\end{center}

This induces $HM(p_2)(a,b) = b$.
\end{enumerate}

The identities $s \circ p_1 = s \circ p_2 = \mathrm{id}_{1_+}$ in $\Gamma^{\mathrm{op}}$ encode the unit axiom $a + 0 = 0 + a = a$ for the monoid $M$. More generally, for any $n$, the morphisms $p_{i,n}$ (projection onto the $i$-th coordinate) and $s_n$ (sum of all coordinates) will play a central role in defining $n$-ary sums in Section \ref{sec:hyper-ops}.
\end{example}

\begin{proposition}[\cite{connes2019absolute}, Proposition 2.3]\label{prop:EM-functor}
The assignment $M \mapsto HM$ defines a fully faithful functor
$$H: \CMonoid \to \GSet$$
from commutative monoids to $\Gamma$-sets.
\end{proposition}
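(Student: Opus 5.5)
We must check three things: $HM$ is a pointed functor, a monoid homomorphism induces a natural transformation $HM \to HN$ compatibly with composition, and the resulting map on hom-sets is bijective.

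\emph{Functoriality of $HM$.} The map $HM(f)$ is well defined because each fibre $f^{-1}(j)$ is finite and $M$ is commutative, so the unordered sum $\sum_{f(i)=j} m_i$ makes sense; empty sums equal $0$. Hence $HM(f)$ sends the basepoint $(0,\ldots,0)$ to the basepoint. For $\mathrm{id}_{n_+}$ every fibre is a singleton, so $HM(\mathrm{id})=\mathrm{id}$. For composable $f\colon n_+\to k_+$ and $g\colon k_+\to l_+$, the fibre of $g\circ f$ over $j\neq 0$ is the disjoint union $\coprod_{g(r)=j} f^{-1}(r)$. Associativity and commutativity of $+$ then give
\[
\sum_{g(f(i))=j} m_i=\sum_{g(r)=j}\sum_{f(i)=r} m_i .
\]
Indices sent to $0$ are discarded at either stage, since $g(0)=0$. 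So $HM(g\circ f)=HM(g)\circ HM(f)$. The pointed functor $0_+\mapsto\{0\}$ is also satisfied.

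\emph{Functoriality of $H$.} For a monoid map $\phi\colon M\to N$, define $(H\phi)_{n_+}=\phi^{\times n}$. Naturality means $\phi\bigl(\sum_{f(i)=j} m_i\bigr)=\sum_{f(i)=j}\phi(m_i)$, which holds because $\phi$ preserves finite sums, including the empty sum, since $\phi(0)=0$. Compatibility of $H$ with identities and composition is immediate.

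\emph{Faithfulness.} Since $HM(1_+)=M$, the component $(H\phi)_{1_+}=\phi$ recovers $\phi$.

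\emph{Fullness.} This is the main step. Let $\alpha\colon HM\to HN$ be a natural transformation, and set $\phi=\alpha_{1_+}$. Naturality for the unique map $0_+\to 1_+$, together with $HM(0_+)=\{0\}$, gives $\phi(0)=0$.

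Next, for each $n$, apply naturality to the projections $p_{i,n}\colon n_+\to 1_+$ (sending $i\mapsto 1$ and all other points to $0$). Since $HM(p_{i,n})$ is the $i$-th coordinate map, we get $\alpha_{n_+}=\phi^{\times n}$. This is the Segal-type observation that $HM(n_+)\to\prod_i HM(1_+)$ is a bijection, so the coordinate maps are jointly injective on $HN(n_+)$ and determine $\alpha_{n_+}$.

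Finally, apply naturality to the sum morphism $s\colon 2_+\to 1_+$ of Example \ref{ex:EM-morphisms}:
\[
\phi(a+b)=\alpha_{1_+}\bigl(HM(s)(a,b)\bigr)=HN(s)\bigl(\alpha_{2_+}(a,b)\bigr)=\phi(a)+\phi(b).
\]
So $\phi$ is a monoid homomorphism with $H\phi=\alpha$.

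The only subtle point is determining $\alpha_{n_+}$ from $\alpha_{1_+}$ via the projections. That rests on $HN(n_+)$ being literally the product $N^n$, with $HN(p_{i,n})$ the coordinate maps. This holds by definition, so no real obstacle remains.
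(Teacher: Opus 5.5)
Your proof is correct and complete. Each step checks out:
\begin{itemize}
\item $HM$ is a well-defined pointed functor, with composition handled correctly via the fibre decomposition of $g\circ f$.
\item The components $(H\phi)_{n_+}=\phi^{\times n}$ are natural because $\phi$ preserves all finite sums, including the empty sum.
\item Fullness follows from naturality against the projections $p_{i,n}$, which forces $\alpha_{n_+}=\phi^{\times n}$, and against the sum morphism $s$, which forces $\phi(a+b)=\phi(a)+\phi(b)$.
\end{itemize}

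The paper does not prove this proposition; it only cites \cite{connes2019absolute}. Your argument is, however, the special case of a source of the form $HM'$ in the paper's Lemma \ref{lem:characterization}. That lemma also recovers a morphism into an Eilenberg--MacLane object from its level-$1$ component via the projections. It also gets additivity because binary sums in an Eilenberg--MacLane object are singletons.

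The general lemma has to work harder only in the converse direction, where a natural transformation is built from a pointed additive map on $X(1_+)$. For an arbitrary $\Gamma$-set $X$ that step needs Lemma \ref{prop:higher-sums}, and hence generalized associativity. For a source $HM'$, your direct observation that a monoid homomorphism preserves every finite sum makes it immediate.
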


The Eilenberg-MacLane construction reveals how commutative monoids encode their additive structure in a $\Gamma$-set: the $n$-th level $HM(n_+)$ consists of all $n$-tuples of elements from $M$, and morphisms describe how to ``sum'' these tuples according to the arrows of the morphism.

\subsection{The spherical monoid functor}

While the Eilenberg-MacLane functor embeds algebraic structures such as commutative monoids into $\Gamma$-sets, we also need a ``free'' construction that builds $\Gamma$-sets from pointed sets. This is provided by the spherical monoid functor, which produces $\Gamma$-sets with minimal additive structure.

\begin{definition}\label{def:spherical}
Let $X$ be a pointed set with basepoint $*$. The \emph{spherical monoid object} $\mathbb{S}X$ is the $\Gamma$-set defined by:
$$\mathbb{S}X(n_+) := X \wedge n_+$$
where $\wedge$ denotes the smash product of pointed sets. For a morphism $f \in \Hom_{\Gamma^{\mathrm{op}}}(n_+, k_+)$, we define:
$$\mathbb{S}X(f) := \mathrm{id}_X \wedge f: X \wedge n_+ \to X \wedge k_+.$$
\end{definition}

\begin{proposition}[\cite{connes2019absolute}, Proposition 2.2]\label{prop:spherical-functor}
The assignment $X \mapsto \mathbb{S}X$ defines a fully faithful functor
$$\mathbb{S}: \Set_* \to \GSet$$
that is left adjoint to the evaluation functor $-(1_+): \GSet \to \Set_*$ sending a $\Gamma$-set to its first level.
\end{proposition}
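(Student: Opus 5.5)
The statement to prove is Proposition~\ref{prop:spherical-functor}: $\mathbb{S}$ is a functor, it is left adjoint to evaluation at $1_+$, and it is fully faithful. The strategy is to exhibit the adjunction first and then read off full faithfulness from the unit.

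\textbf{Functoriality.} A pointed map $g\colon X\to Y$ induces $g\wedge\mathrm{id}_{n_+}\colon X\wedge n_+\to Y\wedge n_+$. For $f\colon n_+\to k_+$ we have
$$(g\wedge \mathrm{id}_{k_+})\circ(\mathrm{id}_X\wedge f)=g\wedge f=(\mathrm{id}_Y\wedge f)\circ(g\wedge\mathrm{id}_{n_+}),$$
so these maps form a natural transformation $\mathbb{S}g\colon\mathbb{S}X\to\mathbb{S}Y$. Preservation of identities and composition is immediate. Also $\mathbb{S}X(0_+)=X\wedge 0_+=\{*\}$, so $\mathbb{S}X$ is a pointed functor.

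\textbf{Adjunction.} Elements of $X\wedge n_+$ are the basepoint together with pairs $(x,i)$ with $x\neq *$ and $1\le i\le n$. The pair $(x,i)$ equals $(\mathrm{id}_X\wedge\iota_i)(x,1)$, where $\iota_i\colon 1_+\to n_+$ sends $1\mapsto i$. So $\mathbb{S}X$ is generated, under the action of $\Gamma^{\mathrm{op}}$, by its first level $X\wedge 1_+\cong X$.

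Given a pointed map $\varphi\colon X\to F(1_+)$, define $\Phi_n\colon X\wedge n_+\to F(n_+)$ by
$$\Phi_n(x,i)=F(\iota_i)(\varphi(x)),$$
with the basepoint sent to the basepoint. Since $\varphi$ is pointed and $F$ is a pointed functor, $\Phi_n$ is well defined and pointed.

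For naturality, take $f\colon n_+\to k_+$.
\begin{itemize}
\item If $f(i)=j\neq0$, then $f\circ\iota_i=\iota_j$, so $F(f)\Phi_n(x,i)=F(\iota_j)\varphi(x)=\Phi_k(x,j)$, which is $\Phi_k$ applied to $(\mathrm{id}_X\wedge f)(x,i)$.
\item If $f(i)=0$, then $f\circ\iota_i$ factors through $0_+$. Because $F(0_+)=*$, $F(f\circ\iota_i)$ sends everything to the basepoint, matching $(\mathrm{id}_X\wedge f)(x,i)=*$.
\end{itemize}
This pointedness issue is the only place that needs care, and I expect it to be the main (mild) obstacle.

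Conversely, a natural transformation $\Phi\colon\mathbb{S}X\to F$ restricts to $\Phi_1\colon X\to F(1_+)$. The two constructions are mutually inverse: restricting the $\Phi$ built from $\varphi$ clearly returns $\varphi$, and any $\Phi$ is determined by $\Phi_1$ because naturality with respect to $\iota_i$ forces $\Phi_n(x,i)=F(\iota_i)\Phi_1(x)$. Naturality of this bijection in $X$ and in $F$ is routine.

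\textbf{Full faithfulness.} A left adjoint is fully faithful exactly when its unit is an isomorphism. Here the unit is $X\to\mathbb{S}X(1_+)=X\wedge 1_+$, which is the canonical isomorphism. Equivalently, $\Hom(\mathbb{S}X,\mathbb{S}Y)\cong\Hom_{\Set_*}(X,\mathbb{S}Y(1_+))\cong\Hom_{\Set_*}(X,Y)$, and this composite is inverse to $g\mapsto\mathbb{S}g$.
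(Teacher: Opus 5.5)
Your proof is correct and is essentially the paper's argument: the paper just invokes the universal property of the smash product and the Yoneda lemma, and your formula $\Phi_n(x,i)=F(\iota_i)\varphi(x)$ is that Yoneda argument made explicit via $n_+\cong\Hom_{\Gamma^{\mathrm{op}}}(1_+,n_+)$. You also go beyond the paper's proof in two respects: you pin down where the normalization $F(0_+)=\{*\}$ is needed, and you deduce full faithfulness from the unit $X\to X\wedge 1_+$ being an isomorphism, which the paper's proof does not address.
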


\begin{proof}
For a morphism $f: X \to Y$ of pointed sets, we define $\mathbb{S}f: \mathbb{S}X \to \mathbb{S}Y$ levelwise by $f \wedge \mathrm{id}_{n_+}$. The adjunction
$$\Hom_{\GSet}(\mathbb{S}X, F) \cong \Hom_{\Set_*}(X, F(1_+))$$
follows from the universal property of the smash product and the Yoneda lemma.
\end{proof}

\begin{example}\label{ex:deff1}
The $\Gamma$-set $\FF := \mathbb{S}\{0,1\}$ satisfies $\FF(n_+) = n_+$ for all $n$, and morphisms act by composition. This is the \emph{field with one element}, serving as the monoidal unit in $\GSet$ (see Theorem \ref{thm:monoidal} below).
\end{example}

\subsection{The monoidal structure}\label{subsec:monoidal}

The category $\GSet$ inherits a symmetric monoidal structure from the smash product on $\Set_*$ via Day convolution. 

\begin{theorem}[\cite{connes2019absolute}, \cite{dundas2012local}]\label{thm:monoidal}
The category $\GSet$ is a symmetric monoidal closed category with:
\begin{itemize}
\item Monoidal product: The smash product $\wedge$ defined by
$$(F \wedge G)(k_+) := \mathrm{colim}_{m_+ \wedge n_+ \to k_+} F(m_+) \wedge G(n_+)$$
where the colimit is taken over all morphisms $m_+ \wedge n_+ \to k_+$ in $\Gamma^{\mathrm{op}}$.

\item Monoidal unit: $\FF = \mathbb{S}\{0,1\}$.

\item Internal Hom: The category is closed, meaning for $\Gamma$-sets $M$ and $N$, there exists an internal hom object $\underline{\Hom}_{\GSet}(M,N)$ defined by
$$\underline{\Hom}_{\GSet}(M,N)(n_+) := \Hom_{\GSet}(M, N_{n_+ \wedge -})$$
where $N_{n_+ \wedge -}$ is the $\Gamma$-set given by $(N_{n_+ \wedge -})(k_+) := N(n_+ \wedge k_+)$.
\end{itemize}
\end{theorem}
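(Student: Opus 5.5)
The plan is to recognize the smash product of Theorem~\ref{thm:monoidal} as a Day convolution and derive everything from Day's general theorem, made explicit with coends. First I make $\Gamma^{\mathrm{op}}$ itself symmetric monoidal under the smash product of pointed finite sets. After fixing a bijection $m_+\wedge n_+\cong (mn)_+$, the structure isomorphisms are inherited from $\Set_*$, and the unit is $1_+$. Next I view $\GSet$ as the category of $\Set_*$-enriched functors $\Gamma^{\mathrm{op}}\to\Set_*$; pointed functors are exactly these, since $\Set_*$-enrichment only asks that the zero map go to the zero map. Finally I note that $\Set_*$ with $\wedge$ is cocomplete and closed symmetric monoidal, with internal hom the pointed mapping set. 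This places us in the setting of Day convolution over a small symmetric monoidal enriched category.

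The key reformulation is to rewrite the stated colimit as a coend:
$$(F\wedge G)(k_+)\;\cong\;\int^{m_+,n_+}\Gamma^{\mathrm{op}}(m_+\wedge n_+,k_+)\wedge F(m_+)\wedge G(n_+).$$
The colimit over the comma category of arrows $m_+\wedge n_+\to k_+$ computes exactly this coend, by the standard identification of left Kan extensions along $\wedge:\Gamma^{\mathrm{op}}\times\Gamma^{\mathrm{op}}\to\Gamma^{\mathrm{op}}$.

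From the coend formula the three claims follow in turn.

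\emph{Unit.} Example~\ref{ex:deff1} gives $\FF(n_+)=n_+=\Gamma^{\mathrm{op}}(1_+,n_+)$, so $\FF$ is the representable functor at the unit $1_+$. The co-Yoneda lemma then gives
$$(\FF\wedge G)(k_+)\cong\int^{n_+}\Gamma^{\mathrm{op}}(1_+\wedge n_+,k_+)\wedge G(n_+)\cong G(k_+).$$

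\emph{Associativity and symmetry.} Using Fubini for coends together with co-Yoneda, both $(F\wedge G)\wedge K$ and $F\wedge(G\wedge K)$ become the triple coend
$$\int^{a,b,c}\Gamma^{\mathrm{op}}(a\wedge b\wedge c,-)\wedge F(a)\wedge G(b)\wedge K(c).$$
The symmetry is induced by the twist maps of $\Gamma^{\mathrm{op}}$ and $\Set_*$. The pentagon and hexagon coherences are then transported from those of $\Gamma^{\mathrm{op}}$ and $\Set_*$, because every isomorphism involved is canonical.

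\emph{Closedness.} Natural maps $F\wedge G\to N$ correspond, by the universal property of the coend, to families $F(m_+)\wedge G(n_+)\to N(m_+\wedge n_+)$ that are natural in both $m_+$ and $n_+$. Currying in $\Set_*$ turns such a family into, for each $m_+$, a natural transformation $G\to N(m_+\wedge -)$ that is itself natural in $m_+$. This is precisely a map $F\to\underline{\Hom}_{\GSet}(G,N)$ with
$$\underline{\Hom}_{\GSet}(G,N)(m_+)=\Hom_{\GSet}\bigl(G,N_{m_+\wedge-}\bigr).$$
Functoriality of this internal hom in $m_+$ comes from postcomposition with $N(f\wedge-)$, and its basepoint is the constant map to the basepoint. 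The adjunction $\Hom(F\wedge G,N)\cong\Hom(F,\underline{\Hom}(G,N))$ is natural in all three variables.

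I expect the main obstacle to be the coherence bookkeeping in the associativity step. The smash product on $\Gamma^{\mathrm{op}}$ is only defined up to the chosen bijections $m_+\wedge n_+\cong(mn)_+$, so one must check that the induced associator on $\GSet$ does not depend on these choices. I would handle this by working with the equivalent category of all finite pointed sets, where $\wedge$ is canonical, and then restricting along the skeleton equivalence. I would also verify that the enriched (pointed) coend agrees with the plain colimit in the statement, which holds because basepoint relations are already imposed by pointedness.
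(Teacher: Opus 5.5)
Your proposal is correct. The paper gives no proof of this statement; it quotes it from \cite{connes2019absolute} and \cite{dundas2012local}. Your argument is the standard one behind those sources, going back to Lydakis. You recognize $\wedge$ as Day convolution, i.e.\ the left Kan extension of $(m_+,n_+)\mapsto F(m_+)\wedge G(n_+)$ along $\wedge\colon\Gamma^{\mathrm{op}}\times\Gamma^{\mathrm{op}}\to\Gamma^{\mathrm{op}}$. You then get the unit from co-Yoneda, associativity from Fubini, and the internal hom as the end $\int_{n_+}\Set_*\bigl(G(n_+),N(m_+\wedge n_+)\bigr)=\Hom_{\GSet}(G,N_{m_+\wedge-})$.

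Compared with the bare citation, your write-up checks two points that the paper's formulation passes over. First, the colimit in the statement is an unenriched Kan extension indexed by arbitrary maps $m_+\wedge n_+\to k_+$. It agrees with the $\Set_*$-enriched coend because the summand indexed by the zero map is collapsed. This happens via the comma-category morphism $(m_+\to 0_+,\mathrm{id}_{n_+})$, which passes through $F(0_+)\wedge G(n_+)=*$. It is exactly here that ``pointed functor'' (equivalently $F(0_+)=*$) is used. Second, on the skeleton the formula only makes sense after choosing bijections $m_+\wedge n_+\cong(mn)_+$. You correctly neutralize this by working over all finite pointed sets.

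Two small corrections. For the pentagon and hexagon, it is cleaner to invoke Day's theorem for a small symmetric monoidal $\Set_*$-category than to appeal to ``canonicity.'' In the currying step, it is each $x\in F(m_+)$, not each $m_+$, that yields a natural transformation $G\to N_{m_+\wedge-}$. What you actually obtain is a pointed map $F(m_+)\to\Hom_{\GSet}(G,N_{m_+\wedge-})$, natural in $m_+$.
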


This monoidal structure allows us to define $\mathbb{F}_1$-algebras as monoid objects in $(\GSet, \wedge, \FF)$, analogous to how commutative rings are monoid objects in $(\Ab, \otimes_\ZZ, \ZZ)$. We will develop this in Section \ref{sec:algebras}.

\begin{convention}
Following Connes-Consani, we often refer to $\Gamma$-sets as \emph{$\mathbb{F}_1$-modules}, emphasizing their role in $\mathbb{F}_1$-geometry. The notations $\GSet$ and $\FMod$ are used interchangeably: just as $\ZZ$-modules are abelian groups, $\mathbb{F}_1$-modules are $\Gamma$-sets.
\end{convention}

Having established the categorical framework of $\Gamma$-sets, we now turn to investigating the additive structure they encode.

\section{Hyper-operations in $\mathbb{F}_1$-modules}\label{sec:hyper-ops}

In this section, we analyze the additive structure encoded in $\Gamma$-sets. Unlike classical abelian groups, where addition is a binary operation $+: M \times M \to M$ with a unique output, the additive data of a $\Gamma$-set is encoded through a family of $n$-ary \emph{hyper-operations}---operations whose outputs are sets rather than single elements. We formalize this notion through the concept of $n$-ary sums, establish that strict associativity fails in general, and prove the law of generalized associativity that nonetheless controls all higher sums.

\subsection{Hyper-operations: Definition and basic properties}

Before defining $n$-ary sums in $\Gamma$-sets, we establish the general framework of hyper-operations from hyperstructure theory. This provides the language for discussing multivalued operations systematically.

\begin{definition}\label{def:hyper-op}
An \emph{$n$-ary hyper-operation} on a nonempty set $H$ is a map
$$f: H^n \to \mathcal{P}(H)$$
where $\mathcal{P}(H)$ denotes the power set of $H$. The number $n$ is called the \emph{arity} of $f$.

In particular, a \emph{binary hyper-operation} is a map
$$\oplus: H \times H \to \mathcal{P}(H).$$
\end{definition}

\begin{remark}
In some literature \cite{hyper}, hyper-operations are restricted to $\mathcal{P}^*(H)$, the non-empty subsets of $H$, ensuring outputs are always ``multivalued'' in a strict sense. We allow the empty set as an output to accommodate cases where certain operations are undefined, as it occurs naturally in $\mathbb{F}_1$-modules (see Example \ref{ex:f1-sums} below). An algebraic structure satisfying similar axioms to a field, but where the addition is a hyper-operation is called a \emph{hyperfield}. \cite{hyper}
\end{remark}

\begin{example}[Krasner hyperfield]\label{ex:krasner}
The two-element set $\mathbb{K} = \{0,1\}$ with multiplication $0 \cdot x = 0$, $1 \cdot 1 = 1$, and a binary hyper-operation $\oplus$:
\begin{center}
\begin{tabular}{c|cc}
$\oplus$ & $0$ & $1$ \\
\hline
$0$ & $0$ & $1$ \\
$1$ & $1$ & $\{0,1\}$
\end{tabular}
\end{center}
forms a \emph{hyperfield} arising as the quotient $\RR/\RR^\times$ \cite{krasner}. The hyper-sum $1 + 1 = \{0,1\}$ reflects the fact that the sum of two nonzero reals can be either zero (if they cancel) or nonzero.
\end{example}

\begin{example}[Sign hyperfield]\label{ex:sign}
The sign hyperfield $\mathbb{S} = \{-1, 0, 1\}$ extends this to capture signs of real numbers:
\begin{center}
\begin{tabular}{c|ccc}
$\oplus$ & $0$ & $1$ & $-1$ \\
\hline
$0$ & $0$ & $1$ & $-1$ \\
$1$ & $1$ & $1$ & $\{-1,0,1\}$ \\
$-1$ & $-1$ & $\{-1,0,1\}$ & $-1$
\end{tabular}
\end{center}
arising as $\RR/\RR_+^\times$.
\end{example}

These examples suggest that $\FF$ itself could be interpreted as a ``hyperfield'' with an even more degenerate additive structure:

\begin{example}[The hyperfield with one element]\label{ex:f1}
We propose $\mathbb{F} = \{0,1\}$ with usual multiplication and hyper-addition:
\begin{center}
\begin{tabular}{c|cc}
$+$ & $0$ & $1$ \\
\hline
$0$ & $0$ & $1$ \\
$1$ & $1$ & $\emptyset$
\end{tabular}
\end{center}
The empty set $1 + 1 = \emptyset$ reflects that there is ``no way'' to add two nonzero elements---the structure is purely multiplicative. We will discuss the relation of the ``hyperfield with one element'' and $\mathbb{F}_1$ in Example \ref{ex:f1-sums}.
\end{example}

A binary hyper-operation naturally extends to operate on subsets:

\begin{convention}\label{conv:subset-extension}
Given a binary hyper-operation $\oplus: H \times H \to \mathcal{P}(H)$ and subsets $A, B \subseteq H$, we define
$$A \oplus B := \bigcup_{a \in A, b \in B} (a \oplus b).$$
We also write $a \oplus B := \{a\} \oplus B$ and $A \oplus b := A \oplus \{b\}$, identifying elements with their singleton sets when convenient.
\end{convention}

\subsection{The $n$-ary sum in $\Gamma$-sets}

We now define the notion of $n$-ary sum in a $\Gamma$-set, which encodes the $n$-ary hyper-additive structure. The central idea is that elements of $X(n_+)$ ``exhibits''  $n$-ary sums: an element $z \in X(n_+)$ whose projections onto level $1$ are $a_1, \ldots, a_n$ exhibits their sum as $X(s_n)(z)$.

One can think of $X(n_+)$ as the space of ``$n$-tuples''. An element $z \in X(n_+)$ has $n$ ``coordinates'' extracted by projection morphisms $p_1, \ldots, p_n$, and a ``sum'' extracted by the sum morphism $s_n$. The $n$-sum $\bigoplus_{i=1}^n a_i$ collects all possible sums that can arise from elements whose coordinates are $a_1, \ldots, a_n$.

To make this precise, we identify the key morphisms in $\Gamma^{\mathrm{op}}$:

\begin{itemize}
\item The \emph{projection onto the $i$-th coordinate} $p_{i,n} \in \Hom_{\Gamma^{\mathrm{op}}}(n_+, 1_+)$ is defined by
$$p_{i,n}(j) = \begin{cases} 1 & \text{if } j = i \\ 0 & \text{otherwise} \end{cases}$$

Diagrammatically, $p_{i,n}$ sends:
\begin{center}
\begin{tikzcd}
0 & 1 & \cdots & i \arrow[ddll] & \cdots & n \\
\\
0 & 1
\end{tikzcd}
\end{center}

where unlabeled arrows map to the basepoint $0$.

\item The \emph{sum morphism} $s_n \in \Hom_{\Gamma^{\mathrm{op}}}(n_+, 1_+)$ is defined by
$$s_n(j) = \begin{cases} 1 & \text{if } j \neq 0 \\ 0 & \text{if } j = 0 \end{cases}$$

Diagrammatically:
\begin{center}
\begin{tikzcd}
0 & 1 \arrow[d] & 2 \arrow[dl] & \cdots & n \arrow[dlll] \\
0 & 1
\end{tikzcd}
\end{center}
\end{itemize}

\begin{definition}\label{def:n-sum}
Let $X$ be a $\Gamma$-set, and let $\{a_i\}_{i=1}^n$ be a finite collection of elements in $X(1_+)$. We define their \emph{$n$-ary sum}, denoted $\bigoplus_{i=1}^n a_i$ or simply $\bigoplus a_i$, as the subset of $X(1_+)$ given by
$$\bigoplus_{i=1}^n a_i := \left\{X(s_n)(z) \,\bigg|\, z \in X(n_+) \text{ and } X(p_{i,n})(z) = a_i \text{ for all } i = 1, \ldots, n\right\}.$$

An element $z \in X(n_+)$ satisfying $X(p_{i,n})(z) = a_i$ for all $i$ is said to \emph{exhibit} the sum $X(s_n)(z) \in \bigoplus a_i$.
\end{definition}

\begin{convention}\label{conv:special-sums}
By convention, we define:
\begin{itemize}
\item The \emph{$1$-ary sum} of a single element $a \in X(1_+)$ is $\bigoplus_{i=1}^1 a_i = \{a\}$.
\item The \emph{$0$-ary sum} (empty sum) is $\bigoplus_{i=1}^0 a_i = \{*\}$, where $*$ is the basepoint.
\end{itemize}
These are consistent with Definition \ref{def:n-sum} since $p_{1,1}$ and $s_1$ are both the identity morphism, and $X(0_+) = \{*\}$ has only the basepoint. When the level $n$ is clear from context, we write $p_i$ instead of $p_{i,n}$ and $s$ instead of $s_n$ to simplify notation.
\end{convention}

For subsets $A_1, \ldots, A_n \subseteq X(1_+)$, we extend the definition by
$$\bigoplus_{i=1}^n A_i := \bigcup_{a_i \in A_i} \bigoplus_{i=1}^n a_i.$$

For the binary sum, we write $a \oplus b$ for brevity.

\subsection{Examples}

We now present several examples that illustrate the behavior of $n$-ary sums and reveal the spectrum from classical strict addition to maximally degenerate hyper-addition.

\begin{example}[Eilenberg-MacLane objects]\label{ex:EM-sums}
Let $M$ be a commutative monoid and consider $HM$ from Definition \ref{def:eilenberg-maclane}. For any collection $\{a_i\}_{i=1}^n \subseteq M = HM(1_+)$, there is a \emph{unique} element $z \in HM(n_+)$ exhibiting their sum, namely the $n$-tuple $z = (a_1, a_2, \ldots, a_n)$.

Indeed, we have:
\begin{align*}
HM(p_i)(a_1, \ldots, a_n) &= a_i \quad \text{for each } i = 1, 2, \ldots, n\\
HM(s)(a_1, \ldots, a_n) &= \sum_{i=1}^n a_i \quad \text{(the sum in } M\text{)}
\end{align*}

Therefore,
$$\bigoplus_{i=1}^n a_i = \left\{\sum_{i=1}^n a_i\right\},$$
a singleton set containing the usual sum in $M$. This shows that our definition recovers classical addition when the $\Gamma$-set has the form $HM$. In this sense, Eilenberg-MacLane objects represent the case where hyper-addition is ``strict''---there is exactly one way to sum any collection of elements.
\end{example}

\begin{example}[The field with one element]\label{ex:f1-sums}
For $\FF = \mathbb{S}\{0,1\}$ from Example \ref{ex:deff1}, we have $\FF(n_+) = n_+ = \{0,1,\ldots,n\}$. Let us compute $1 \oplus 1$ in $\FF$.

Elements of $\FF(2_+) = \{0,1,2\}$ are potential exhibits for the sum. We need $z \in \FF(2_+)$ such that:
\begin{itemize}
\item $\FF(p_1)(z) = 1$
\item $\FF(p_2)(z) = 1$
\end{itemize}

However, no element of $\FF(2_+)$ exhibits the sum of $1$ and $1$, thus we have
$$1 \oplus 1 = \emptyset.$$

Indeed:
\begin{itemize}
\item For $1 \oplus 0$: the element $z = 1 \in \FF(2_+)$ satisfies $p_1(1) = 1$, $p_2(1) = 0$, and $s(1) = 1$, so $1 \oplus 0 = \{1\}$.
\item For $0 \oplus 1$: the element $z = 2 \in \FF(2_+)$ satisfies $p_1(2) = 0$, $p_2(2) = 1$, and $s(2) = 1$, so $0 \oplus 1 = \{1\}$.
\item For $0 \oplus 0$: the element $z = 0 \in \FF(2_+)$ satisfies $p_1(0) = 0$, $p_2(0) = 0$, and $s(0) = 0$, so $0 \oplus 0 = \{0\}$.
\end{itemize}

Yet there is no ``fourth'' element in $\FF(2_+)$ that exhibits $1 \oplus 1$. This reflects the purely multiplicative nature of $\FF$: there is ``no way'' to add two nonzero elements, justifying the term ``hyperfield with one element'' from Example \ref{ex:f1}.
\end{example}

\begin{example}[Spherical monoid objects]\label{ex:spherical-sums}
Let $X$ be a pointed set with at least two elements and consider $\mathbb{S}X$ from Definition \ref{def:spherical}. Recall that $\mathbb{S}X(n_+) = X \wedge n_+$, whose elements can be written as pairs $(x, i)$ where $x \in X \setminus \{*\}$ and $i \in \{1,\ldots,n\}$, with a basepoint adjoined.

For $z = (x, j) \in \mathbb{S}X(n_+)$, we have
$$\mathbb{S}X(p_i)(x, j) = \begin{cases} x & \text{if } i = j \\ * & \text{if } i \neq j \end{cases}$$

Therefore, $z = (x,j)$ exhibits the sum of $\{a_1, \ldots, a_n\}$ only if:
\begin{itemize}
\item $a_j = x \neq *$
\item $a_i = *$ for all $i \neq j$
\end{itemize}

In this case, $\mathbb{S}X(s_n)(x,j) = x$, so
$$\bigoplus_{i=1}^n a_i = \begin{cases} \{a_j\} & \text{if exactly one } a_j \neq * \text{ and all others are } * \\ \emptyset & \text{otherwise} \end{cases}$$

In particular, for nonzero $a, b \in X$, we have $a \oplus b = \emptyset$, generalizing Example \ref{ex:f1-sums}. The only nontrivial sums are those involving the basepoint: $a \oplus * = \{a\}$ and $* \oplus * = \{*\}$. This reflects the purely multiplicative nature of spherical monoid objects: they have no additive rules except those forced by the basepoint axioms.
\end{example}

The following lemma shows that morphisms of $\Gamma$-sets respect $n$-ary sums, though not necessarily strictly (the image of a sum may be contained in, but not equal to, the sum of images):

\begin{lemma}\label{lem:preserve-sums}
Let $f: X \to Y$ be a morphism of $\Gamma$-sets, and let $\{A_i\}_{i=1}^n$ be a finite collection of subsets of $X(1_+)$. Then
$$f\left(\bigoplus_{i=1}^n A_i\right) \subseteq \bigoplus_{i=1}^n f(A_i).$$

In particular, for elements $a, b \in X(1_+)$, we have $f(a \oplus b) \subseteq f(a) \oplus f(b)$.
\end{lemma}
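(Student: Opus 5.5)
The plan is to argue elementwise and use only the naturality of $f$. Because sums of subsets are defined as unions of sums of elements, it suffices to treat a single tuple. Fix $a_i \in A_i$ for $i=1,\ldots,n$ and an element $c \in \bigoplus_{i=1}^n a_i$. We must show $f_{1_+}(c) \in \bigoplus_{i=1}^n f_{1_+}(a_i)$. This set is contained in $\bigoplus_{i=1}^n f(A_i)$, because $f_{1_+}(a_i) \in f(A_i)$ and the subset sum is a union over all choices of elements.

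By Definition \ref{def:n-sum}, there is some $z \in X(n_+)$ with $X(p_{i,n})(z) = a_i$ for every $i$ and $X(s_n)(z) = c$. Our candidate exhibit in $Y$ is $w := f_{n_+}(z) \in Y(n_+)$. Naturality of $f$ with respect to the morphisms $p_{i,n}$ and $s_n$ of $\Gamma^{\mathrm{op}}$ gives
\[
Y(p_{i,n})(w) = f_{1_+}\bigl(X(p_{i,n})(z)\bigr) = f_{1_+}(a_i), \qquad Y(s_n)(w) = f_{1_+}\bigl(X(s_n)(z)\bigr) = f_{1_+}(c).
\]
So $w$ exhibits $f_{1_+}(c)$ as an element of $\bigoplus_{i=1}^n f_{1_+}(a_i)$, which is what we need.

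The edge cases $n=0$ and $n=1$ follow from Convention \ref{conv:special-sums}. For $n=1$ both sides equal $f(A_1)$. For $n=0$ both sides equal $\{*\}$, because $f$ is a morphism of pointed functors and so preserves basepoints; alternatively, one can note that the same naturality argument still applies. The binary statement is the special case $n=2$ with $A_1=\{a\}$ and $A_2=\{b\}$.

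There is no real obstacle here. The only point needing care is the bookkeeping that reduces the statement about subsets to the statement about single elements through the union definition.
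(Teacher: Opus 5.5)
Your proposal is correct and matches the paper's proof: both reduce to a single tuple $a_i \in A_i$, push the exhibiting element $z \in X(n_+)$ forward to $f(z) \in Y(n_+)$, and use naturality of $f$ with respect to $p_{i,n}$ and $s_n$ to show it exhibits $f(c)$. Your separate treatment of $n=0,1$ is a harmless extra, since Convention~\ref{conv:special-sums} already makes the general argument cover those cases.
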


\begin{proof}
It suffices to show that if $c \in \bigoplus_{i=1}^n a_i$ for $a_i \in A_i$, then $f(c) \in \bigoplus_{i=1}^n f(a_i)$.

Suppose $c \in \bigoplus_{i=1}^n a_i$. Then there exists $z \in X(n_+)$ exhibiting $c$ as the sum of $\{a_i\}$:
$$X(p_i)(z) = a_i \quad \text{for all } i, \quad \text{and} \quad X(s_n)(z) = c.$$

Consider $f(z) \in Y(n_+)$. By functoriality of $f$ (i.e., naturality of the natural transformation), we have:
$$Y(p_i)(f(z)) = f(X(p_i)(z)) = f(a_i)$$
and
$$Y(s_n)(f(z)) = f(X(s_n)(z)) = f(c).$$

Therefore, $f(z)$ exhibits $f(c)$ as the sum of $\{f(a_i)\}$, which means $f(c) \in \bigoplus_{i=1}^n f(a_i)$.
\end{proof}

\begin{remark}
The inclusion in Lemma \ref{lem:preserve-sums} can be strict. For example, consider a morphism $f: \mathbb{S}M \to HR$ from a spherical monoid algebra to an Eilenberg-MacLane algebra, and take nonzero $a, b \in M$. Then $a \oplus b = \emptyset$ in $\mathbb{S}M$ by Example \ref{ex:spherical-sums}, so
$$f(a \oplus b) = f(\emptyset) = \emptyset \subsetneq \{f(a) +_R f(b)\} = f(a) \oplus f(b).$$
This shows that morphisms preserve hyper-additive structure, but do not necessarily reflect it: the target may have more sums than the source.
\end{remark}

\subsection{The failure of strict associativity}\label{subsec:failure}

Now we consider the natural question: can $n$-ary sums be computed using iterated binary sums? In classical algebra, associativity ensures that
$$a_1 + a_2 + a_3 = (a_1 + a_2) + a_3 = a_1 + (a_2 + a_3).$$

For $\Gamma$-sets, we have three potentially different sets:
\begin{itemize}
\item The ternary sum: $a_1 \oplus a_2 \oplus a_3$
\item The left-associated binary sum: $(a_1 \oplus a_2) \oplus a_3$
\item The right-associated binary sum: $a_1 \oplus (a_2 \oplus a_3)$
\end{itemize}

One might hope these are always equal. However, this turns out to be false in general, as the next counter-example shows. While equality fails, we will show in Section \ref{subsec:gen-assoc} that a weaker \emph{inclusion} holds, and this suffices for our construction of extension of scalars functor.

\begin{example}[Failure of strict associativity]\label{ex:failure-assoc}
Consider the quotient $\Gamma$-set $H\ZZ/H(3\ZZ)$, obtained by collapsing the sub-$\Gamma$-set $H(3\ZZ) \subseteq H\ZZ$ to the basepoint. (Quotients of $\Gamma$-sets are formed levelwise: $(H\ZZ/H(3\ZZ))(n_+) = H\ZZ(n_+)/H(3\ZZ)(n_+) = \ZZ^{\oplus n}/3\ZZ^{\oplus n}$.) Let $[n]$ denote the image of $n \in \ZZ$ in the quotient.

In $H\ZZ/H(3\ZZ)$, we compute the ternary sum:
$$[1] \oplus [2] \oplus [2] = \{[5]\}.$$

This is because in $H\ZZ$, the element $(1, 2, 2) \in \ZZ^{\oplus 3}$ exhibits the sum $1 + 2 + 2 = 5$, and this is the unique element exhibiting this combination. In the quotient, $([1], [2], [2])$ still uniquely exhibits the sum $[5]$.

On the other hand, consider the iterated binary sum:
\begin{align*}
([1] \oplus [2]) \oplus [2] &= [3] \oplus [2] \\
&= [0] \oplus [2] \quad \text{(since } [3] = [0] \text{ in the quotient)}\\
&= \{[3n + 2] : n \in \ZZ\} \\
&= \{\ldots, [2], [5], [8], [11], \ldots\}
\end{align*}

The equality $[0] \oplus [2]=\{[3n + 2] : n \in \ZZ\}$ holds because for any $n \in \ZZ$, the element $([3n], [2]) \in H\ZZ/H(3\ZZ) (2_+)$ exhibits the sum $[3n + 2]$.

Therefore:
$$[1] \oplus [2] \oplus [2] = \{[5]\} \subsetneq \{\ldots, [2], [5], [8], \ldots\} = ([1] \oplus [2]) \oplus [2].$$

The ternary sum is \emph{strictly contained in} the iterated binary sum, but they are not equal.
\end{example}

\begin{remark}
Despite this failure, we still have the inclusion
$$[1] \oplus [2] \oplus [2] \subseteq ([1] \oplus [2]) \oplus [2].$$

We shall prove that this type of inclusion \emph{always} holds:
    $$a_1 \oplus a_2 \oplus a_3 \subseteq (a_1 \oplus a_2)\oplus a_3$$

and similarly 
    $$a_1 \oplus a_2 \oplus a_3 \subseteq a_1 \oplus(a_2\oplus a_3)$$

Moreover, it holds for any way of parenthesizing an $n$-ary sum. This weaker property, which we call \emph{generalized associativity}, is sufficient for our purposes.
\end{remark}

\subsection{The law of generalized associativity}\label{subsec:gen-assoc}

We now establish the central technical result of this section, which shows that while strict associativity fails, an inclusion property holds.

\begin{theorem}[Law of Generalized Associativity, Theorem \ref{thm:main-A}]\label{thm:gen-assoc}
Let $I = \{1, 2, \ldots, n\}$ be a finite index set, and let $\mathcal{P} = \{I_j\}_{j=1}^m$ be a partition of $I$ with $n_j := |I_j|$ and $n = \sum_{j=1}^m n_j$. Let $X$ be a $\Gamma$-set and let $\{A_i\}_{i=1}^n$ be a family of subsets of $X(1_+)$. Then
$$\bigoplus_{i \in I} A_i \subseteq \bigoplus_{j=1}^m \left(\bigoplus_{i \in I_j} A_i\right).$$
\end{theorem}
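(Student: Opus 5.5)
The plan is to reduce the statement to elements and then build explicit exhibiting elements by applying $X$ to suitable morphisms of $\Gamma^{\mathrm{op}}$. By the definition of sums of subsets, it suffices to show the following. Suppose $c \in \bigoplus_{i\in I} a_i$ with $a_i \in A_i$, exhibited by some $z \in X(n_+)$, so that $X(p_{i,n})(z) = a_i$ and $X(s_n)(z) = c$. Then there are elements $b_j \in \bigoplus_{i\in I_j} a_i \subseteq \bigoplus_{i \in I_j} A_i$ with $c \in \bigoplus_{j=1}^m b_j$.

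The construction uses the morphism $\pi \in \Hom_{\Gamma^{\mathrm{op}}}(n_+, m_+)$ defined by $\pi(0)=0$ and $\pi(i) = j$ whenever $i \in I_j$. This is well defined because $\{I_j\}$ is a partition. Set $w := X(\pi)(z) \in X(m_+)$ and $b_j := X(p_{j,m})(w)$. Two compositions in $\Gamma^{\mathrm{op}}$ then finish the outer level:
\begin{itemize}
\item $s_m \circ \pi = s_n$, since both send every nonzero $i$ to $1$. Hence $X(s_m)(w) = c$, so $w$ exhibits $c \in \bigoplus_{j=1}^m b_j$.
\item $p_{j,m}\circ \pi$ is the map $n_+ \to 1_+$ sending $I_j$ to $1$ and everything else to $0$.
\end{itemize}

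The remaining step, and the only real point, is to show $b_j \in \bigoplus_{i\in I_j} a_i$. Enumerate $I_j = \{i_1 < \cdots < i_{n_j}\}$ and let $\rho_j : n_+ \to (n_j)_+$ send $i_k \mapsto k$ and all other points to $0$. Put $z_j := X(\rho_j)(z)$. Then:
\begin{itemize}
\item $p_{k,n_j}\circ \rho_j = p_{i_k,n}$, so $X(p_{k,n_j})(z_j) = a_{i_k}$.
\item $s_{n_j}\circ\rho_j = p_{j,m}\circ\pi$, so $X(s_{n_j})(z_j) = b_j$.
\end{itemize}
Thus $z_j$ exhibits $b_j \in \bigoplus_{i \in I_j} a_i$, and functoriality of $X$ turns these identities in $\Gamma^{\mathrm{op}}$ into the required equalities.

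The degenerate cases should be checked against Convention~\ref{conv:special-sums}. When $n_j = 1$ the map $\rho_j$ is essentially the projection, and the $1$-ary sum is $\{a_{i_1}\}$, which is consistent. If empty blocks were allowed, then $(0)_+$ gives $b_j = *$, matching the $0$-ary sum convention.

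I expect no serious obstacle. The main care needed is in verifying the composition identities in $\Gamma^{\mathrm{op}}$, in particular that $z_j$ and $w$ arise from the same $z$, and in bookkeeping the reindexing of each block $I_j$ to $\{1,\dots,n_j\}$.
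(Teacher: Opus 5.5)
Your proposal is correct and is essentially the paper's proof: your $\pi$, $w$, $\rho_j$, $z_j$ are the paper's partition morphism $f$, $y = X(f)(z)$, order-preserving reindexing maps $\tilde{\phi_j}$, and $y_j = X(\tilde{\phi_j})(z)$. You verify them through the same identities $s_m\circ\pi = s_n$, $s_{n_j}\circ\rho_j = p_{j,m}\circ\pi$, and $p_{k,n_j}\circ\rho_j = p_{i_k,n}$, and your check of the degenerate cases ($n_j=1$, and empty blocks if allowed) is a small addition the paper leaves implicit.
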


\begin{remark}
This theorem states that the $n$-ary sum can be ``computed'' by first taking sums within the parts of a partition, then summing the results. While this does not give equality (as Example \ref{ex:failure-assoc} shows), the inclusion reveals that all information about $n$-ary sums is \emph{controlled by} lower-order operations. In particular, taking $m = 2$ repeatedly shows that all $n$-ary sums are controlled by successive binary operations.
\end{remark}

\begin{proof}
Let $a \in \bigoplus_{i \in I} A_i$. Then there exist elements $a_i \in A_i$ for all $i \in I$ such that $a \in \bigoplus_{i \in I} a_i$. By definition, there exists $z \in X(n_+)$ such that:
$$X(s_n)(z) = a \quad \text{and} \quad X(p_{i,n})(z) = a_i \quad \text{for all } i \in I.$$

Our goal is to show that $a \in \bigoplus_{j=1}^m \left(\bigoplus_{i \in I_j} a_i\right)$.

Let $f \in \Hom_{\Gamma^{\mathrm{op}}}(n_+, m_+)$ be the morphism determined by the partition $\mathcal{P}$, defined by:
$$f(i) = \begin{cases} j & \text{if } i \in I_j \\ 0 & \text{if } i = 0 \end{cases}$$

Set $y := X(f)(z) \in X(m_+)$. We claim that $y$ exhibits $a$ as the sum of the $m$ intermediate sums.

Indeed, we have:
$$X(s_m)(y) = X(s_m \circ f)(z) = X(s_n)(z) = a$$
where the second equality holds because $s_m \circ f = s_n$ (both morphisms send all nonzero elements of $n_+$ to $1 \in 1_+$).

Now we show $X(p_{j,m})(y) \in \bigoplus_{i \in I_j} a_i$ for each $j$. Fix $j \in \{1, \ldots, m\}$. We must construct an element $y_j \in X(n_j)$ that exhibits $X(p_{j,m})(y)$ as the sum of $\{a_i\}_{i \in I_j}$.

Let $\phi_j: I_j \to \{1, \ldots, n_j\}$ be an order-preserving bijection. Define $\tilde{\phi_j} \in \Hom_{\Gamma^{\mathrm{op}}}(n_+, (n_j)_+)$ by:
$$\tilde{\phi_j}(t) = \begin{cases} \phi_j(t) & \text{if } t \in I_j \\ 0 & \text{if } t \notin I_j \end{cases}$$

Set $y_j := X(\tilde{\phi_j})(z) \in X(n_j)$.

We verify that $y_j$ exhibits the desired sum. First:
$$X(s_{n_j})(y_j) = X(s_{n_j} \circ \tilde{\phi_j})(z).$$

Observe that both $s_{n_j} \circ \tilde{\phi_j}$ and $p_{j,m} \circ f$ send elements in $I_j$ to $1$ and all other elements to $0$. 

Therefore:
$$X(s_{n_j})(y_j) = X(s_{n_j} \circ \tilde{\phi_j})(z) = X(p_{j,m} \circ f)(z) = X(p_{j,m})(X(f)(z)) = X(p_{j,m})(y).$$

Second, for any $\ell \in \{1, \ldots, n_j\}$, set $i = \phi_j^{-1}(\ell) \in I_j$. Then:
$$X(p_{\ell,n_j})(y_j) = X(p_{\ell,n_j} \circ \tilde{\phi_j})(z) = X(p_{i,n})(z) = a_i \in A_i.$$

Therefore, $y_j$ exhibits $X(p_{j,m})(y)$ as the sum of $\{a_i\}_{i \in I_j}$, which means:
$$X(p_{j,m})(y) \in \bigoplus_{i \in I_j} a_i.$$

Since this holds for all $j \in \{1, \ldots, m\}$, we conclude that $y$ exhibits $a$ as the sum of the intermediate sums:
$$a = X(s_m)(y) \in \bigoplus_{j=1}^m \left(\bigoplus_{i \in I_j} a_i\right). \qedhere$$

\end{proof}

The following corollary makes explicit the case of iterated binary sums:

\begin{corollary}\label{cor:binary-iteration}
Let $X$ be a $\Gamma$-set, and let $\{a_i\}_{i=1}^n$ be a finite collection of elements in $X(1_+)$. Then
$$\bigoplus_{i=1}^n a_i \subseteq (a_1 \oplus (a_2 \oplus (\cdots \oplus (a_{n-1} \oplus a_n) \cdots))).$$

More generally, for any way of parenthesizing the expression $a_1 \oplus a_2 \oplus \cdots \oplus a_n$ using binary operations, the $n$-ary sum is contained in the result.
\end{corollary}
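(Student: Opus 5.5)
The plan is to prove the general statement (any parenthesization) by induction on $n$, using Theorem \ref{thm:gen-assoc} for the outermost binary split and a monotonicity property of sums of subsets to pass to the inner parenthesizations. The right-nested formula is then the special case where every split is $\{a_1\}$ versus the rest.

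First I record a monotonicity fact, immediate from the definition $\bigoplus_{i} A_i = \bigcup_{a_i\in A_i}\bigoplus_i a_i$: if $A_i \subseteq B_i$ for each $i$, then $\bigoplus_i A_i \subseteq \bigoplus_i B_i$. I also note that a binary sum of subsets as defined in the paper agrees with Convention \ref{conv:subset-extension} for $A \oplus B$.

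The cases $n \le 2$ are trivial, using Convention \ref{conv:special-sums}. For $n\ge 3$, any full binary parenthesization has an outermost operation splitting $I=\{1,\dots,n\}$ into two nonempty consecutive blocks $I_1=\{1,\dots,k\}$ and $I_2=\{k+1,\dots,n\}$. The expression is $P_1 \oplus P_2$, where $P_1$ is a parenthesization of $a_1,\dots,a_k$ and $P_2$ one of $a_{k+1},\dots,a_n$. Theorem \ref{thm:gen-assoc} with $m=2$ and $A_i=\{a_i\}$ gives
$$\bigoplus_{i=1}^n a_i \subseteq \Bigl(\bigoplus_{i\in I_1} a_i\Bigr)\oplus\Bigl(\bigoplus_{i\in I_2} a_i\Bigr).$$
The blocks have fewer than $n$ elements, so induction gives $\bigoplus_{i\in I_1} a_i \subseteq P_1$ and $\bigoplus_{i\in I_2} a_i\subseteq P_2$. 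Monotonicity then yields the inclusion into $P_1\oplus P_2$.

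One bookkeeping point: Theorem \ref{thm:gen-assoc} indexes each inner sum through the order-preserving bijection $\phi_j$, so $\bigoplus_{i\in I_2} a_i$ is the $(n-k)$-ary sum of $a_{k+1},\dots,a_n$ in order, which is what the induction hypothesis concerns. Nothing here is a real obstacle. The only care needed is to run the induction on the statement for all finite families of elements (or of subsets), so it applies to the relabelled blocks, and to check that sums of subsets are monotone.
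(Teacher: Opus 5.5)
Your proposal is correct and is essentially the paper's proof. The paper simply says to apply Theorem~\ref{thm:gen-assoc} repeatedly with binary partitions ($\{1\}\cup\{2,\ldots,n\}$, then $\{2\}\cup\{3,\ldots,n\}$, and so on), and your induction on the outermost split, together with the monotonicity $A_i\subseteq B_i \Rightarrow \bigoplus_i A_i\subseteq\bigoplus_i B_i$ and your care about the order-preserving relabelling of blocks, makes rigorous the ``apply repeatedly'' step that the paper leaves implicit.
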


\begin{proof}
Apply Theorem \ref{thm:gen-assoc} repeatedly with appropriate binary partitions. For the specific right-nested parenthesization, use partitions $I = \{1\} \cup \{2, \ldots, n\}$, then $\{2\} \cup \{3, \ldots, n\}$, and so on.
\end{proof}

\begin{example}
Let us verify Corollary \ref{cor:binary-iteration} explicitly for $n = 3$. Apply Theorem \ref{thm:gen-assoc} with the partition $\mathcal{P} = \{\{1\}, \{2, 3\}\}$ of $I = \{1, 2, 3\}$. This gives:
$$a_1 \oplus a_2 \oplus a_3 \subseteq a_1 \oplus (a_2 \oplus a_3)$$
directly.

Explicitly, if $c \in a_1 \oplus a_2 \oplus a_3$, then there exists $z \in X(3_+)$ such that:
\begin{align*}
X(p_1)(z) &= a_1\\
X(p_2)(z) &= a_2\\
X(p_3)(z) &= a_3\\
X(s_3)(z) &= c
\end{align*}

The partition morphism $f: 3_+ \to 2_+$ sends $f(1) = 1$ and $f(2) = f(3) = 2$ (and $f(0) = 0$). 

Setting $y = X(f)(z) \in X(2_+)$, we have:
\begin{align*}
X(p_1)(y) &= X(p_1 \circ f)(z) = X(p_1)(z) = a_1\\
X(p_2)(y) &= X(p_2 \circ f)(z) = X(s_2 \circ \tilde{\phi})(z) \quad \text{where } \tilde{\phi}(2)=1, \tilde{\phi}(3)=2\\
&\in a_2 \oplus a_3\\
X(s_2)(y) &= X(s_2 \circ f)(z) = X(s_3)(z) = c
\end{align*}

Therefore $y$ exhibits $c$ as an element of $a_1 \oplus (a_2 \oplus a_3)$, confirming the inclusion.
\end{example}

\subsection{Interpretation as higher hyperstructures}

Having established the law of generalized associativity, we can now articulate the precise sense in which $\mathbb{F}_1$-modules generalize classical additive structures. This provides conceptual context for the extension of scalars construction in the next section.

\begin{remark}\label{obs:interpretation}
    An $\mathbb{F}_1$-module $X$ can be viewed as consisting of:
\begin{enumerate}[label=(\roman*)]
\item A pointed set $X(1_+)$ (the ``underlying set'')
\item A family of $n$-ary \emph{hyper-operations} $\bigoplus: X(1_+)^n \to \mathcal{P}(X(1_+))$ for each $n \geq 2$
\item Satisfying the law of \emph{generalized} associativity (Theorem \ref{thm:gen-assoc})
\end{enumerate}

In contrast, a $\ZZ$-module (abelian group) $M$ consists of:
\begin{enumerate}[label=(\roman*)]
\item A pointed set $M$ (the ``underlying set'')
\item A family of $n$-ary \emph{operations} $+: M^n \to M$ for each $n \geq 2$ (all reducible to binary $+$)
\item Satisfying \emph{classical} associativity: $(a + b) + c = a + (b + c)$
\end{enumerate}
\end{remark}

In the category of abelian groups, all $n$-ary operations reduce to iterated binary operations by associativity, and the associativity law reduces to the 3-element equation $(a + b) + c = a + (b + c)$.

The $\Gamma$-set framework makes explicit the ``higher additive structures'' that are implicit (and trivial) in classical algebra. From this perspective:

\begin{itemize}
\item The Eilenberg-MacLane functor $H: \Ab \to \FMod$ can be seen as a \emph{forgetful functor}, forgeting that the hyper-operations are strict (have unique results).

\item The natural question is: does there exist a \emph{left adjoint} in the opposite direction? That is, can we universally ``strictify'' a family of $n$-ary hyper-operations satisfying generalized associativity into a single binary operation?
\end{itemize}

The answer is yes, and this left adjoint is precisely the extension of scalars functor $-\otimes_{\FF} \ZZ$.

\section{Extension of scalars for $\mathbb{F}_1$-modules}\label{sec:extension-modules}

Having established the law of generalized associativity in the previous section, we are now ready to construct the extension of scalars functor $-\otimes_{\FF} \ZZ: \FMod \to \Ab$ that associates to each $\mathbb{F}_1$-module an abelian group.

The key challenge is to compress the infinite family of $n$-ary hyper-operations in a $\Gamma$-set $X$ into a single binary operation in an abelian group. The generalized associativity law (Theorem \ref{thm:gen-assoc}) is crucial: it shows that all $n$-ary sums are controlled by successive binary operations, reducing the problem to understanding the binary structure encoded at the second level $X(2_+)$.

Our construction proceeds in four steps:

\begin{enumerate}
\item Characterize morphisms to Eilenberg-MacLane objects (Section \ref{subsec:char-maps}). We show that morphisms $X \to HM$ correspond to ``pointed additive maps'' $g: X(1_+) \to M$ satisfying two simple conditions.

\item Define the extension of scalars (Section \ref{subsec:def-extension}). We construct $X \otimes_{\FF} \ZZ$ as a quotient of the free abelian group on $X(1_+)$ by relations encoding binary hyper-sums.

\item Verify the universal property (Section \ref{subsec:adjunction-modules}). We prove that morphisms $X \otimes_{\FF} \ZZ \to M$ correspond bijectively to morphisms $X \to HM$, establishing the adjunction.

\item Compute examples (Section \ref{subsec:examples-modules}). We compute $X \otimes_{\FF} \ZZ$ for several important classes of $\mathbb{F}_1$-modules.
\end{enumerate}

\subsection{Characterization of maps to Eilenberg-MacLane objects}\label{subsec:char-maps}

To construct $X \otimes_{\FF} \ZZ$ as a universal object, we first need to understand what morphisms \emph{out of} it should look like. By the proposed adjunction, morphisms $X \otimes_{\FF} \ZZ \to M$ should correspond to morphisms $X \to HM$ of $\Gamma$-sets. The following lemma characterizes such morphisms in elementary terms, showing they are completely determined by their behavior on the first level $X(1_+)$.

We begin with a technical lemma:

\begin{lemma}\label{prop:higher-sums}
Let $g: X(1_+) \to M$ be a pointed additive map. Then for any $n \geq 2$ and any $c \in \bigoplus_{i=1}^n a_i$, we have
$$g(c) = \sum_{i=1}^n g(a_i)$$
where the right side is the sum in the commutative monoid $M$.
\end{lemma}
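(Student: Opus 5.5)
The plan is to argue by induction on $n$, using the law of generalized associativity (Theorem~\ref{thm:gen-assoc}) to reduce an $n$-ary sum to a binary sum whose first argument is an $(n-1)$-ary sum. I read ``pointed additive map'' as the two conditions announced at the start of this section: $g(*) = 0$, and $g(c) = g(a) + g(b)$ whenever $c \in a \oplus b$. The base case $n = 2$ is then exactly the second condition, so there is nothing to prove there.

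For the inductive step, assume the statement holds for all arities between $2$ and $n-1$, with $n \geq 3$, and let $c \in \bigoplus_{i=1}^n a_i$.
\begin{enumerate}
\item Apply Theorem~\ref{thm:gen-assoc} to the partition $I_1 = \{1, \ldots, n-1\}$, $I_2 = \{n\}$, with singletons $A_i = \{a_i\}$. This gives
$$c \in \Bigl(\bigoplus_{i=1}^{n-1} a_i\Bigr) \oplus \{a_n\},$$
using the $1$-ary sum convention $\bigoplus_{i \in \{n\}} a_i = \{a_n\}$ from Convention~\ref{conv:special-sums}.
\item By the subset extension of sums, there is some $b \in \bigoplus_{i=1}^{n-1} a_i$ with $c \in b \oplus a_n$.
\item The binary condition gives $g(c) = g(b) + g(a_n)$.
\item The induction hypothesis gives $g(b) = \sum_{i=1}^{n-1} g(a_i)$, or simply $g(b) = g(a_1)$ when $n - 1 = 1$ (which cannot occur here since $n \geq 3$).
\item Combining these, and using associativity and commutativity in $M$, yields $g(c) = \sum_{i=1}^n g(a_i)$.
\end{enumerate}

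There is no real obstacle here. The only point needing care is that Theorem~\ref{thm:gen-assoc} really does produce an honest intermediate element $b$ lying in the $(n-1)$-ary sum of the \emph{same} elements $a_1, \ldots, a_{n-1}$. This holds because we take the $A_i$ to be singletons, and the union defining the subset-valued sum then just ranges over that one $(n-1)$-ary sum. Note that the inclusion direction of Theorem~\ref{thm:gen-assoc} is exactly the right one: we only need every $n$-ary sum to arise as an iterated binary sum, not the converse.

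The degenerate cases of the conventions match the claim as well: the $0$-ary sum is $\{*\}$ and $g(*) = 0$ is the empty sum in $M$. These are not needed, however, since the statement only concerns $n \geq 2$.
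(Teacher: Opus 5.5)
Your proof is correct and follows the paper's argument. Both argue by induction on $n$, using the law of generalized associativity with a two-block partition to extract an intermediate element of an $(n-1)$-ary sum, and then apply the binary additivity condition. The only cosmetic difference is that you split off the last summand (partition $\{1,\ldots,n-1\}\cup\{n\}$), whereas the paper splits off the first (partition $\{1\}\cup\{2,\ldots,n\}$).
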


\begin{proof}
By Corollary \ref{cor:binary-iteration}, we have
$$c \in \bigoplus_{i=1}^n a_i \subseteq (a_1 \oplus (a_2 \oplus (\cdots \oplus (a_{n-1} \oplus a_n) \cdots))).$$

We proceed by induction. For $n = 2$, this is the additive property. Suppose the claim holds for $n-1$. If $c \in \bigoplus_{i=1}^n a_i$, then by Corollary \ref{cor:binary-iteration} with partition $\{1\} \cup \{2, \ldots, n\}$, there exists $d \in \bigoplus_{i=2}^n a_i$ such that $c \in a_1 \oplus d$. By the inductive hypothesis, $g(d) = \sum_{i=2}^n g(a_i)$. By the additive property,
$$g(c) = g(a_1) + g(d) = g(a_1) + \sum_{i=2}^n g(a_i) = \sum_{i=1}^n g(a_i). \qedhere$$
\end{proof}

Now we present the key lemma:

\begin{lemma}\label{lem:characterization}
Let $X$ be a $\Gamma$-set, let $M$ be a commutative monoid written additively, and let $f: X \to HM$ be a morphism of $\Gamma$-sets. Then $f$ is equivalent to the data of a map $g: X(1_+) \to M$ satisfying:
\begin{enumerate}[label=(\roman*)]
\item \emph{(Pointed)} $g(*) = 0 \in M$, where $*$ is the basepoint of $X(1_+)$.
\item \emph{(Additive)} If $c \in a \oplus b$, then $g(c) = g(a) + g(b)$ in $M$.
\end{enumerate}

Conversely, any map $g: X(1_+) \to M$ satisfying these two conditions extends uniquely to a morphism of $\Gamma$-sets $f: X \to HM$ such that $f|_{1_+} = g$.
\end{lemma}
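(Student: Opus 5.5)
The plan is to treat the two directions separately, using throughout that $HM(n_+)=M^{\oplus n}$ is detected coordinatewise by the projections $HM(p_{i,n})$.

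\textbf{From $f$ to $g$.} Given $f: X \to HM$, set $g := f_{1_+}: X(1_+) \to M$. Condition (i) holds because $f_{1_+}$ is a map of pointed sets and the basepoint of $HM(1_+)$ is $0$. For (ii), Lemma \ref{lem:preserve-sums} gives $f(a\oplus b)\subseteq f(a)\oplus f(b)$. By Example \ref{ex:EM-sums}, the right-hand side is the singleton $\{g(a)+g(b)\}$. So $c\in a\oplus b$ forces $g(c)=g(a)+g(b)$.

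\textbf{Uniqueness.} By naturality, $HM(p_{i,n})(f_{n_+}(z)) = g(X(p_{i,n})(z))$ for each $i$. An element of $M^{\oplus n}$ is determined by its coordinates, so the only possible formula is
$$f_{n_+}(z) := \bigl(g(X(p_{1,n})(z)),\ldots,g(X(p_{n,n})(z))\bigr).$$
This shows $f$ is determined by $g$. For the converse direction it remains to check that this formula actually defines a morphism of $\Gamma$-sets. Pointedness is immediate: the basepoint of $X(n_+)$ projects to $*$, and $g(*)=0$.

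\textbf{Naturality (the main step).} Take $\varphi: n_+\to k_+$ and $z\in X(n_+)$, and fix $j\in\{1,\ldots,k\}$. Comparing $j$-th coordinates, naturality says
$$\sum_{\varphi(i)=j} g\bigl(X(p_{i,n})(z)\bigr) = g\bigl(X(p_{j,k}\circ\varphi)(z)\bigr).$$
To prove this, I will mimic the proof of Theorem \ref{thm:gen-assoc}. Let $F=\varphi^{-1}(j)$ with $r=|F|$, and choose a bijection $F\cong\{1,\ldots,r\}$. Extend it to $\tilde\psi: n_+\to r_+$ by sending everything outside $F$ to $0$, and put $w=X(\tilde\psi)(z)$. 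Then $s_r\circ\tilde\psi = p_{j,k}\circ\varphi$, and $p_{\ell,r}\circ\tilde\psi = p_{i,n}$ where $i$ is the element of $F$ corresponding to $\ell$. Hence $w$ exhibits $X(p_{j,k}\circ\varphi)(z)$ as an element of $\bigoplus_{i\in F}X(p_{i,n})(z)$. There are three cases according to $r$.
\begin{itemize}
\item If $r\ge 2$, Lemma \ref{prop:higher-sums} gives the identity directly; this is where generalized associativity enters.
\item If $r=1$, the map $p_{j,k}\circ\varphi$ equals $p_{i,n}$ for the unique $i\in F$, and both sides agree trivially.
\item If $r=0$, the map $p_{j,k}\circ\varphi$ factors through $0_+$. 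Since $X(0_+)=\{*\}$, the right side is $g(*)=0$, which is the empty sum.
\end{itemize}

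I expect this naturality verification to be the main obstacle, specifically the bookkeeping of the reindexing map $\tilde\psi$. Once that is in place, the reduction to Lemma \ref{prop:higher-sums} is mechanical. Finally, the two constructions are mutually inverse: restricting the constructed $f$ to level $1$ returns $g$ because $p_{1,1}=\mathrm{id}$, and uniqueness covers the other composite.
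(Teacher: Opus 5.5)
Your proof is correct and follows the paper's route. You restrict to level $1$ and use Lemma~\ref{lem:preserve-sums} with Example~\ref{ex:EM-sums}, derive uniqueness from the coordinatewise formula forced by the projections, and check naturality by factoring $p_{j}\circ\gamma$ through $s_r$ and applying Lemma~\ref{prop:higher-sums}. Your separate handling of $r=0$ and $r=1$ is a small improvement, since Lemma~\ref{prop:higher-sums} is only stated for $n\ge 2$ and the paper applies it to those cases without comment.
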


\begin{remark}\label{rem:pointed-additive}
We call a map $g: X(1_+) \to M$ satisfying conditions (i) and (ii) a \emph{pointed additive map}. The lemma says that pointed additive maps are in bijection with morphisms of $\Gamma$-sets $X \to HM$.
\end{remark}

\begin{proof}[Proof of Lemma \ref{lem:characterization}]

Let $f: X \to HM$ be a morphism of $\Gamma$-sets. Define $g := f|_{1_+}: X(1_+) \to M$ to be the restriction of $f$ to the first level. We verify that $g$ satisfies the required properties.

\emph{Pointed property:} Since $f$ is a natural transformation of pointed functors, it must map basepoints to basepoints at each level. In particular, $f$ maps the basepoint of $X(1_+)$ to the basepoint of $HM(1_+) = M$, which is $0$. Thus $g(*) = f(*) = 0$.

\emph{Additive property:} Suppose $c \in a \oplus b$. By Lemma \ref{lem:preserve-sums}, we have
$$g(c) = f(c) \in f(a \oplus b) \subseteq f(a) \oplus f(b).$$

But $f(a), f(b), f(c) \in HM(1_+) = M$, and in an Eilenberg-MacLane object $HM$, the binary sum is strict (Example \ref{ex:EM-sums}):
$$f(a) \oplus f(b) = \{f(a) + f(b)\}$$
where $+$ is the monoid operation in $M$. Therefore $g(c) = f(c) = f(a) + f(b) = g(a) + g(b)$.

\emph{Uniqueness of $f$:} We claim that $f$ is completely determined by $g = f|_{1_+}$. 

Let $n \geq 0$ and consider any $x \in X(n_+)$. We will show that $f|_{n_+}(x) \in HM(n_+) = M^{\oplus n}$ is uniquely determined by $g$.

For each $i \in \{1, \ldots, n\}$, the naturality of $f$ with respect to the projection $p_i: n_+ \to 1_+$ gives a commutative diagram:
\begin{center}
\begin{tikzcd}
X(n_+) \arrow[d, "X(p_i)"'] \arrow[r, "f|_{n_+}"] & HM(n_+) = M^{\oplus n} \arrow[d, "HM(p_i)"] \\
X(1_+) \arrow[r, "f|_{1_+} = g"'] & HM(1_+) = M
\end{tikzcd}
\end{center}

Tracing through the diagram, the $i$-th component of $f|_{n_+}(x) \in M^{\oplus n}$ is:
$$\text{(i-th component of } f|_{n_+}(x)) = HM(p_i)(f|_{n_+}(x)) = f|_{1_+}(X(p_i)(x)) = g(X(p_i)(x)).$$

Since an element of $M^{\oplus n}$ is uniquely determined by its $n$ components (via the projections $p_1, \ldots, p_n$), we conclude:
$$f|_{n_+}(x) = (g(X(p_1)(x)), g(X(p_2)(x)), \ldots, g(X(p_n)(x))).$$

Therefore, $f$ is uniquely determined by $g$ at every level.

Now we show the other direction. Given a pointed additive map $g: X(1_+) \to M$, we construct a morphism of $\Gamma$-sets $f: X \to HM$ by defining it levelwise using the formula derived above:
\begin{align*}
f|_{n_+}: X(n_+) &\to HM(n_+) = M^{\oplus n} \\
x &\mapsto (g(X(p_1)(x)), g(X(p_2)(x)), \ldots, g(X(p_n)(x))).
\end{align*}

We must verify that this defines a morphism of $\Gamma$-sets, i.e., a natural transformation of pointed functors.

\emph{Pointed property:} For the basepoint $* \in X(n_+)$, we have $X(p_i)(*) = *$ for all $i$ (since $X$ is a pointed functor and $p_i$ is a pointed map). Therefore:
$$f|_{n_+}(*) = (g(*), g(*), \ldots, g(*)) = (0, 0, \ldots, 0)$$
which is the basepoint of $HM(n_+) = M^{\oplus n}$. Thus $f$ preserves basepoints at each level.

\emph{Naturality:} For any morphism $\gamma \in \Hom_{\Gamma^{\mathrm{op}}}(n_+, m_+)$, we need to verify commutativity of the diagram:
\begin{center}
\begin{tikzcd}
X(n_+) \arrow[d, "X(\gamma)"'] \arrow[r, "f|_{n_+}"] & HM(n_+) = M^{\oplus n} \arrow[d, "HM(\gamma)"] \\
X(m_+) \arrow[r, "f|_{m_+}"'] & HM(m_+) = M^{\oplus m}
\end{tikzcd}
\end{center}

For any $x \in X(n_+)$ and coordinate $j \in \{1, \ldots, m\}$, we compare the $j$-th component along both paths.

\emph{Bottom-left path:} 
\begin{align*}
\text{(j-th component of } f|_{m_+}(X(\gamma)(x))) &= g(X(p_j)(X(\gamma)(x)))\\
&= g(X(p_j \circ \gamma)(x))
\end{align*}

\emph{Top-right path:}
\begin{align*}
\text{(j-th component of } HM(\gamma)(f|_{n_+}(x))) &= \sum_{i \in \gamma^{-1}(j)} \text{(i-th component of } f|_{n_+}(x))\\
&= \sum_{i \in \gamma^{-1}(j)} g(X(p_i)(x))
\end{align*}
where the first equality uses the definition of $HM(\gamma)$ from Definition \ref{def:eilenberg-maclane}.

To show these are equal, observe that
$$X(p_j \circ \gamma)(x) \in \bigoplus_{i \in \gamma^{-1}(j)} X(p_i)(x).$$

This holds because $p_j \circ \gamma$ sends elements in $\gamma^{-1}(j)$ to $1$ and all other elements to $0$. We can factor $p_j \circ \gamma = s_r \circ \gamma'$ where $r := |\gamma^{-1}(j)|$ and $\gamma': n_+ \to r_+$ sends elements in $\gamma^{-1}(j)$ bijectively to $\{1, \ldots, r\}$ and everything else to $0$. Then:
\begin{align*}
X(p_j \circ \gamma)(x) &= X(s_r \circ \gamma')(x)\\
&= X(s_r)(X(\gamma')(x))\\
&\in \bigoplus_{\ell=1}^r X(p_{\ell} \circ \gamma')(x)\\
&= \bigoplus_{i \in \gamma^{-1}(j)} X(p_i)(x).
\end{align*}

By Lemma \ref{prop:higher-sums}, since $g$ is a pointed additive map:
$$g(X(p_j \circ \gamma)(x)) = \sum_{i \in \gamma^{-1}(j)} g(X(p_i)(x)),$$
establishing commutativity of the diagram. Therefore $f$ is a morphism of $\Gamma$-sets, and by construction $f|_{1_+} = g$.
\end{proof}

\subsection{Definition of extension of scalars}\label{subsec:def-extension}

With Lemma \ref{lem:characterization} in hand, we can now define the extension of scalars for $\mathbb{F}_1$-modules. The insight from Lemma \ref{prop:higher-sums} is that we only need to impose relations for binary hyper-sums, despite the presence of infinitely many $n$-ary operations.

\begin{definition}\label{def:extension-modules}
Let $X$ be an $\mathbb{F}_1$-module (i.e., a $\Gamma$-set). We define its \emph{extension of scalars to $\ZZ$} by
$$X \otimes_{\FF} \ZZ := \ZZ[X(1_+)] / \mathcal{R}$$
where:
\begin{itemize}
\item $\ZZ[X(1_+)]$ is the \emph{free abelian group} generated by the set $X(1_+)$. 

\item $\mathcal{R}$ is the subgroup generated by the following relations:
\begin{enumerate}[label=(\roman*)]
\item \emph{(Basepoint relation)} $[*] = 0$, where $*$ is the basepoint of $X(1_+)$ and $0$ is the identity of the abelian group.
\item \emph{(Additivity relations)} $[a] + [b] = [c]$ for all $a, b, c \in X(1_+)$ with $c \in a \oplus b$.
\end{enumerate}
\end{itemize}

Here $[a]$ denotes the generator of $\ZZ[X(1_+)]$ corresponding to $a \in X(1_+)$. We write $[a]$ also for its image in the quotient $X \otimes_{\FF} \ZZ$ when no confusion arises.
\end{definition}

Equivalently, $X \otimes_{\FF} \ZZ$ can be described as the abelian group with:
\begin{itemize}
\item \emph{Generators:} One generator $[a]$ for each $a \in X(1_+) \setminus \{*\}$
\item \emph{Relations:} $[a] + [b] = [c]$ whenever $c \in a \oplus b$
\end{itemize}
The basepoint relation $[*] = 0$ allows us to ignore the basepoint in the generating set.

\begin{remark}
$X \otimes_{\FF} \ZZ$ is the ``most general'' abelian group in which the hyper-addition of $X$ becomes a strict binary operation. The relations imposed are precisely those required to ensure that any pointed additive map $g: X(1_+) \to M$ (in the sense of Lemma \ref{lem:characterization}) factors through the quotient.
\end{remark}

Before proving the main theorem, we establish that the extension of scalars is functorial:

\begin{lemma}\label{lem:functoriality}
The assignment $X \mapsto X \otimes_{\FF} \ZZ$ extends to a functor
$$-\otimes_{\FF} \ZZ: \FMod \to \Ab.$$
\end{lemma}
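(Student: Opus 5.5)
To prove Lemma \ref{lem:functoriality}, we must define $\varphi \otimes_{\FF} \ZZ$ for every morphism $\varphi: X \to Y$ of $\Gamma$-sets. We then check that this assignment preserves identities and composition.

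The natural candidate comes from the first level. The component $\varphi|_{1_+}: X(1_+) \to Y(1_+)$ is a map of sets, so it induces a homomorphism of free abelian groups
$$\ZZ[\varphi|_{1_+}]: \ZZ[X(1_+)] \to \ZZ[Y(1_+)], \qquad [a] \mapsto [\varphi(a)].$$
We compose this with the quotient map $\ZZ[Y(1_+)] \to Y \otimes_{\FF} \ZZ$. The main step is to show that this composite kills the subgroup $\mathcal{R}_X$. It then descends to a homomorphism $\varphi \otimes_{\FF} \ZZ : X \otimes_{\FF} \ZZ \to Y \otimes_{\FF} \ZZ$.

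It suffices to check the generators of $\mathcal{R}_X$.
\begin{itemize}
\item \emph{Basepoint relation.} Since $\varphi$ is a natural transformation of pointed functors, $\varphi(*_X) = *_Y$. Hence $[*_X] \mapsto [*_Y]$, which is $0$ in $Y \otimes_{\FF} \ZZ$.
\item \emph{Additivity relations.} Suppose $c \in a \oplus b$ in $X$. By Lemma \ref{lem:preserve-sums}, $\varphi(c) \in \varphi(a) \oplus \varphi(b)$ in $Y$. So $[\varphi(a)] + [\varphi(b)] - [\varphi(c)]$ is one of the defining relations of $Y \otimes_{\FF} \ZZ$, and it vanishes in the quotient.
\end{itemize}

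An equivalent formulation is that the composite $X(1_+) \to Y(1_+) \to Y \otimes_{\FF} \ZZ$ is a pointed additive map in the sense of Remark \ref{rem:pointed-additive}. The universal property of the presentation then gives the factorization.

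Functoriality follows at the level of generators. The identity natural transformation sends $[a] \mapsto [a]$, so it induces the identity homomorphism. For a composite $\psi \circ \varphi$, we have $(\psi\circ\varphi)|_{1_+} = \psi|_{1_+} \circ \varphi|_{1_+}$, so both $(\psi\circ\varphi)\otimes_{\FF}\ZZ$ and $(\psi\otimes_{\FF}\ZZ)\circ(\varphi\otimes_{\FF}\ZZ)$ send $[a]$ to $[\psi(\varphi(a))]$. Two homomorphisms out of a quotient of a free abelian group that agree on the images of the generators are equal, and this finishes the argument.

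There is no real obstacle here. The only step with content is well-definedness on the additivity relations, and Lemma \ref{lem:preserve-sums} disposes of it directly. Nothing beyond binary sums is needed, because $\mathcal{R}$ is defined using binary sums only.
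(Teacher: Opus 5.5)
Your proposal is correct and follows essentially the same route as the paper. Like the paper's proof, you extend $\varphi|_{1_+}$ linearly to the free abelian groups and check that it kills the basepoint and additivity relations, the latter via Lemma \ref{lem:preserve-sums}; you then get identities and composition from agreement on generators and uniqueness of the induced map on the quotient.
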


\begin{proof}
Let $\varphi: X \to Y$ be a morphism of $\mathbb{F}_1$-modules. The map $\varphi|_{1_+}: X(1_+) \to Y(1_+)$ is a pointed map, so it extends uniquely to a group homomorphism on the free abelian groups:
\begin{align*}
\ZZ[\varphi]: \ZZ[X(1_+)] &\to \ZZ[Y(1_+)] \\
\sum_i n_i \cdot [a_i] &\mapsto \sum_i n_i \cdot [\varphi(a_i)]
\end{align*}

We claim that $\ZZ[\varphi]$ descends to a homomorphism on the quotients, i.e., that $\ZZ[\varphi]$ maps $\mathcal{R}_X$ into $\mathcal{R}_Y$. It suffices to check that $\ZZ[\varphi]$ vanishes on the generators of $\mathcal{R}_X$.

\emph{Basepoint relation:} Since $\varphi$ is a pointed map, $\varphi(*_X) = *_Y$. Therefore:
$$\ZZ[\varphi]([*_X]) = [\varphi(*_X)] = [*_Y] = 0$$
in $Y \otimes_{\FF} \ZZ$.

\emph{Additivity relations:} Suppose $c \in a \oplus b$ in $X$. By Lemma \ref{lem:preserve-sums}, $\varphi(c) \in \varphi(a) \oplus \varphi(b)$ in $Y$. Therefore, the relation $[\varphi(a)] + [\varphi(b)] = [\varphi(c)]$ holds in $Y \otimes_{\FF} \ZZ$. Thus:
$$\ZZ[\varphi]([a] + [b] - [c]) = [\varphi(a)] + [\varphi(b)] - [\varphi(c)] = 0$$
in $Y \otimes_{\FF} \ZZ$.

Therefore, $\ZZ[\varphi]$ induces a well-defined homomorphism:
$$\varphi \otimes_{\FF} \ZZ: X \otimes_{\FF} \ZZ \to Y \otimes_{\FF} \ZZ.$$

Preservation of identity follows because $\ZZ[\mathrm{id}_X] = \mathrm{id}_{\ZZ[X(1_+)]}$, which clearly descends to $\mathrm{id}_{X \otimes_{\FF} \ZZ}$. Preservation of composition follows from $\ZZ[\psi \circ \varphi] = \ZZ[\psi] \circ \ZZ[\varphi]$ and the uniqueness of the induced maps on quotients.
\end{proof}

\subsection{The adjunction}\label{subsec:adjunction-modules}

We now establish the main result of this section, showing that extension of scalars is left adjoint to the Eilenberg-MacLane functor:

\begin{theorem}[Theorem \ref{thm:main-B}]\label{thm:extension-adjunction-modules}
The extension of scalars functor $-\otimes_{\FF} \ZZ: \FMod \to \Ab$ is left adjoint to the Eilenberg-MacLane functor $H: \Ab \to \FMod$. That is, there is a natural bijection
$$\Hom_{\FMod}(X, HM) \cong \Hom_{\Ab}(X \otimes_{\FF} \ZZ, M)$$
for all $\mathbb{F}_1$-modules $X$ and abelian groups $M$.
\end{theorem}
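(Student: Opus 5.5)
The bijection will be assembled from two elementary correspondences, using Lemma \ref{lem:characterization} as the bridge. First, by Lemma \ref{lem:characterization} (an abelian group being in particular a commutative monoid), restriction to the first level $f \mapsto f|_{1_+}$ gives a bijection between $\Hom_{\FMod}(X, HM)$ and the set of pointed additive maps $g: X(1_+) \to M$, in the sense of Remark \ref{rem:pointed-additive}. Second, I will show that pointed additive maps $X(1_+) \to M$ are in bijection with group homomorphisms $X \otimes_{\FF} \ZZ \to M$. Composing the two bijections produces the desired map $\Hom_{\FMod}(X, HM) \to \Hom_{\Ab}(X \otimes_{\FF} \ZZ, M)$, explicitly $f \mapsto \bigl([a] \mapsto f|_{1_+}(a)\bigr)$.

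For the second correspondence I will use the universal property of the free abelian group. A set map $g: X(1_+) \to M$ extends uniquely to a homomorphism $\tilde g: \ZZ[X(1_+)] \to M$. This $\tilde g$ kills the generators of $\mathcal{R}$ exactly when $g(*) = 0$ and $g(a) + g(b) - g(c) = 0$ whenever $c \in a \oplus b$, which is precisely the definition of a pointed additive map. So $\tilde g$ descends to a unique homomorphism $\bar g: X \otimes_{\FF} \ZZ \to M$ with $\bar g([a]) = g(a)$. Conversely, given a homomorphism $h: X \otimes_{\FF} \ZZ \to M$, the map $a \mapsto h([a])$ is pointed and additive, because the relations hold in the quotient. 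These two assignments are mutually inverse, since a homomorphism out of $X \otimes_{\FF} \ZZ$ is determined by its values on the generators $[a]$.

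It remains to check naturality in $X$ and $M$. For $\varphi: X' \to X$ in $\FMod$, precomposing $f$ with $\varphi$ restricts at level one to $g \circ \varphi|_{1_+}$. On the other side, precomposing $\bar g$ with $\varphi \otimes_{\FF} \ZZ$ sends $[a'] \mapsto g(\varphi(a'))$, using the explicit formula from Lemma \ref{lem:functoriality}. Both sides agree on generators, so they agree. For $\psi: M \to M'$ in $\Ab$, the map $H\psi$ acts at level one as $\psi$ itself, so $(H\psi \circ f)|_{1_+} = \psi \circ g$, which corresponds to $\psi \circ \bar g$.

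I do not expect a real obstacle here. The substantive input, namely that a pointed additive map (which only sees binary sums) extends to a full natural transformation $X \to HM$ compatible with all $n$-ary structure, is already contained in Lemma \ref{lem:characterization} via Lemma \ref{prop:higher-sums} and generalized associativity. The only point needing care is bookkeeping: making sure the relations in $\mathcal{R}$ match conditions (i) and (ii) exactly, and that naturality is checked on generators.
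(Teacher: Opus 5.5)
Your proposal is correct and takes essentially the same route as the paper. You restrict to level one and use Lemma~\ref{lem:characterization} to identify $\Hom_{\FMod}(X, HM)$ with pointed additive maps, then use the universal property of $\ZZ[X(1_+)]$ and its quotient by $\mathcal{R}$ to match these with homomorphisms $X \otimes_{\FF} \ZZ \to M$; your explicit check of naturality on generators in both variables fills in the step the paper only calls ``straightforward.''
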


\begin{proof}

Consider the canonical map
\begin{align*}
\iota: X(1_+) &\to X \otimes_{\FF} \ZZ \\
a &\mapsto [a]
\end{align*}
where $[a]$ denotes the image in the quotient.

By construction of the quotient, $\iota$ is a pointed additive map in the sense of Lemma \ref{lem:characterization}:
\begin{itemize}
\item \emph{Pointed:} $\iota(*) = [*] = 0$ by the basepoint relation.
\item \emph{Additive:} If $c \in a \oplus b$, then by the additivity relations, $[c] = [a] + [b]$ in $X \otimes_{\FF} \ZZ$. Thus $\iota(c) = [c] = [a] + [b] = \iota(a) + \iota(b)$.
\end{itemize}

This gives a commutative diagram:
\begin{center}
\begin{tikzcd}[column sep=large]
X(1_+) \arrow[r, "\iota"] \arrow[dr, dashed] & X \otimes_{\FF} \ZZ \arrow[d, dashed] \\
& M
\end{tikzcd}
\end{center}

where any pointed additive map from $X(1_+)$ to $M$ factors uniquely through $\iota$, as we show below.

Define $\Phi$ as follows:
$$\Phi: \Hom_{\FMod}(X, HM) \to \Hom_{\Ab}(X \otimes_{\FF} \ZZ, M)$$

Given $f: X \to HM$, let $g := f|_{1_+}: X(1_+) \to M$ be its restriction to the first level. By Lemma \ref{lem:characterization}, $g$ is a pointed additive map. 

Extend $g$ linearly to $\ZZ[X(1_+)]$:
\begin{align*}
\tilde{h}: \ZZ[X(1_+)] &\to M \\
\sum_{a \in X(1_+)} n_a \cdot [a] &\mapsto \sum_{a \in X(1_+)} n_a \cdot g(a)
\end{align*}

We verify that $\tilde{h}$ vanishes on the relations $\mathcal{R}$:
\begin{itemize}
\item \emph{Basepoint:} $\tilde{h}([*]) = g(*) = 0$ since $g$ is pointed.
\item \emph{Additivity:} If $c \in a \oplus b$, then $g(c) = g(a) + g(b)$ since $g$ is additive. Thus:
$$\tilde{h}([a] + [b] - [c]) = g(a) + g(b) - g(c) = 0.$$
\end{itemize}

Therefore, $\tilde{h}$ descends to a unique group homomorphism $h: X \otimes_{\FF} \ZZ \to M$ making the following diagram commute:
\begin{center}
\begin{tikzcd}
X(1_+) \arrow[r, "\iota"] \arrow[dr, "g"'] & X \otimes_{\FF} \ZZ \arrow[d, "\exists! h"] \\
& M
\end{tikzcd}
\end{center}

We define $\Phi(f) := h$.

Now given $h: X \otimes_{\FF} \ZZ \to M$, define $g := h \circ \iota: X(1_+) \to M$. Then the composition $g$ is still a pointed additive map, which by lemma \ref{lem:characterization} extends uniquely to a morphism $f: X \to HM$. This defines the inverse map
$$\Psi: \Hom_{\Ab}(X \otimes_{\FF} \ZZ, M) \to \Hom_{\FMod}(X, HM)$$
It is straightforward to verify that $\Phi$ and $\Psi$ are inverses, and naturality in $X$ and $M$ follows from the functoriality established in Lemma \ref{lem:functoriality} and the naturality of the characterization in Lemma \ref{lem:characterization}. Therefore, we have established the adjunction:
$$\Hom_{\FMod}(X, HM) \cong \Hom_{\Ab}(X \otimes_{\FF} \ZZ, M)$$
\end{proof}

\subsection{Examples}\label{subsec:examples-modules}

We now compute $X \otimes_{\FF} \ZZ$ for several important classes of $\mathbb{F}_1$-modules, illustrating the range of behavior.

\begin{example}[Extension of $\FF$]\label{ex:f1-extension}
For the base $\mathbb{F}_1$-module $\FF$ itself, we have
$$\FF \otimes_{\FF} \ZZ \cong \ZZ.$$

Indeed, $\FF(1_+) = \{0, 1\}$ and the only nontrivial hyper-sums (from Example \ref{ex:f1-sums}) are:
\begin{align*}
0 \oplus 0 &= \{0\}\\
0 \oplus 1 &= \{1\}\\
1 \oplus 0 &= \{1\}\\
1 \oplus 1 &= \emptyset
\end{align*}

The free abelian group $\ZZ[\{0, 1\}]$ has generators $[0]$ and $[1]$. The relations are:
\begin{itemize}
\item \emph{Basepoint:} $[0] = 0$
\item \emph{Additivity:} From $0 \oplus 1 = \{1\}$, we get $[0] + [1] = [1]$, which simplifies to $[1] = [1]$ (no new relation), and the same holds for $1 \oplus 0 = \{1\}$. Since $1 \oplus 1 = \emptyset$, there is no relation involving $[1] + [1]$.
\end{itemize}

The basepoint relation identifies $[0]$ with the zero element, and there are no other nontrivial relations involving $[1]$. Therefore:
$$\FF \otimes_{\FF} \ZZ = \langle [1] \rangle \cong \ZZ$$
where $[1]$ freely generates the integers. This confirms that extending scalars from $\FF$ to $\ZZ$ produces $\ZZ$ itself, as expected.
\end{example}

\begin{example}[Spherical monoid objects]\label{ex:spherical-extension}
More generally, let $Y$ be a pointed set with basepoint $*$ and at least one non-basepoint element. Then
$$(\mathbb{S}Y) \otimes_{\FF} \ZZ \cong \ZZ[Y]$$
where the right side denotes the free abelian group generated by the non-basepoint elements of $Y$.

This follows from Example \ref{ex:spherical-sums}: in $\mathbb{S}Y$, the only nontrivial sums are those involving the basepoint:
\begin{align*}
a \oplus * &= \{a\}\\
* \oplus a &= \{a\}\\
* \oplus * &= \{*\}\\
a \oplus b &= \emptyset \quad \text{for nonzero } a, b \in Y
\end{align*}

The additivity relations only impose $[a] + [*] = [a]$ and $[*] + [*] = [*]$. Combined with the basepoint relation $[*] = 0$, these give no additional constraints beyond identifying $*$ with $0$. For nonzero $a, b \in Y$, since $a \oplus b = \emptyset$, there is no relation between $[a]$ and $[b]$.

Therefore, the elements $\{[a] : a \in Y \setminus \{*\}\}$ generate a free abelian group:
$$(\mathbb{S}Y) \otimes_{\FF} \ZZ \cong \ZZ[Y \setminus \{*\}].$$

In particular, taking $Y = \{0, 1\}$ recovers Example \ref{ex:f1-extension}.
\end{example}

\begin{example}[Eilenberg-MacLane objects]\label{ex:EM-extension}
For an abelian group $M$, we have
$$(HM) \otimes_{\FF} \ZZ \cong M.$$

This follows because in $HM$, every $n$-ary sum has a unique representative: $a_1 \oplus \cdots \oplus a_n = \{a_1 +_M \cdots +_M a_n\}$ (Example \ref{ex:EM-sums}). Therefore, the additivity relations in Definition \ref{def:extension-modules} impose:
$$[a] + [b] = [a +_M b]$$
for all $a, b \in M$. This means that every formal linear combination in $\ZZ[M]$ can be ``computed'' using the group structure of $M$, and the quotient $\ZZ[M]/\mathcal{R}$ is isomorphic to $M$ itself. The group homomorphism $M \to (HM) \otimes_{\FF} \ZZ$ sending $a \mapsto [a]$ is an isomorphism.

This shows that the extension of scalars functor is the identity on Eilenberg-MacLane objects of abelian groups, confirming that the adjunction $(-\otimes_{\FF} \ZZ) \dashv H$ provides a reflection of $\FMod$ onto $\Ab$.

More generally, if $M$ is a commutative monoid (not necessarily a group), then
$$(HM) \otimes_{\FF} \ZZ \cong M^{\mathrm{gp}}$$
where $M^{\mathrm{gp}}$ denotes the \emph{group completion} (the Grothendieck group) of $M$---the universal abelian group receiving a monoid homomorphism from $M$. This can be constructed as:
$$M^{\mathrm{gp}} = \ZZ[M] / \langle [a] + [b] = [a +_M b] : a, b \in M \rangle$$
where $+_M$ denotes the monoid operation in $M$. The same calculation as above shows that our construction recovers this universal property.
\end{example}

\begin{example}[Quotients of Eilenberg-MacLane objects]\label{ex:quotient-extension}
Consider the quotient $X = H\ZZ / H(n\ZZ)$ for some $n \geq 2$. Then
$$X \otimes_{\FF} \ZZ \cong \ZZ/n\ZZ.$$

To see this, note that for any $[a] \in \ZZ/n\ZZ$, we have:
$$[0] \oplus [a] = \{[kn+a], k \in \ZZ\}$$
where $(kn,a) \in X(2_+)$ exhibits the desired sum.

Therefore, the additivity relations and basepoint relation in $X \otimes_{\FF} \ZZ$ imposes:
$$[0] + [a]  = [kn+a], \forall k \in \ZZ$$
Since $[0] = 0$ by the basepoint relation, this gives $[a] = [kn + a]$ for all $k \in \ZZ$.

Combined with the fact that $[1]$ generates the group, we conclude:
$$X \otimes_{\FF} \ZZ \cong \ZZ/n\ZZ$$
where the isomorphism sends $[1] \mapsto \overline{1} \in \ZZ/n\ZZ$.
\end{example}

\section{Extension of scalars for $\mathbb{F}_1$-algebras}\label{sec:algebras}

In the previous section, we constructed the extension of scalars functor for $\mathbb{F}_1$-modules, which strictifies the hyper-additive structure into classical abelian group addition. For applications to algebraic geometry, we need more than just abelian groups---we need \emph{rings} with both addition and multiplication. This requires extending our construction to $\mathbb{F}_1$-algebras, which incorporate multiplicative structure alongside the additive hyper-structure.

Since an $\mathbb{F}_1$-algebra $A$ is a $\Gamma$-set with additional multiplicative structure making it a monoid object in the symmetric monoidal category $(\GSet, \wedge, \FF)$, the underlying $\Gamma$-set determines an abelian group $A \otimes_{\FF} \ZZ$ by the construction of Section \ref{sec:extension-modules}, and the multiplicative structure of $A$ induces a compatible ring structure on this abelian group. The result is a functor $-\otimes_{\FF} \ZZ: \FAlg \to \CRing$ left adjoint to the Eilenberg-MacLane embedding of rings into $\mathbb{F}_1$-algebras.

\subsection{$\mathbb{F}_1$-algebras as monoid objects}\label{subsec:f1-alg-def}

Just as commutative rings can be defined as commutative monoid objects in the symmetric monoidal category $(\Ab, \otimes_{\ZZ}, \ZZ)$, we define commutative $\mathbb{F}_1$-algebras as commutative monoid objects in $(\GSet, \wedge, \FF)$. This perspective makes clear the parallel between classical algebraic geometry over $\ZZ$ and absolute algebraic geometry over $\FF$.

\begin{definition}\label{def:f1-algebra}
A \emph{commutative $\mathbb{F}_1$-algebra} is a commutative monoid object in the symmetric monoidal category $(\GSet, \wedge, \FF)$.

Explicitly, a commutative $\mathbb{F}_1$-algebra consists of:
\begin{itemize}
\item A $\Gamma$-set $A$
\item A \emph{multiplication morphism} $\mu: A \wedge A \to A$ in $\GSet$
\item A \emph{unit morphism} $\eta: \FF \to A$
\end{itemize}
satisfying the usual associativity and unit axioms for monoids, as well as commutativity (compatibility with the symmetry isomorphism $\tau: A \wedge A \to A \wedge A$).

A \emph{morphism of commutative $\mathbb{F}_1$-algebras} $\varphi: A \to B$ is a morphism of the underlying $\Gamma$-sets that preserves the multiplication and unit.

We denote by $\FAlg$ the category of commutative $\mathbb{F}_1$-algebras.
\end{definition}

\begin{convention}\label{conv:commutative}
Following the standard conventions of algebraic geometry, we use the term \emph{$\mathbb{F}_1$-algebra} to mean \emph{commutative $\mathbb{F}_1$-algebra} throughout this paper, unless explicitly stated otherwise.
\end{convention}

\begin{remark}\label{remark:5.3}
By the universal property of the smash product (Theorem \ref{thm:monoidal}), a multiplication morphism $\mu: A \wedge A \to A$ is equivalent to a collection of basepoint-preserving maps
$$\mu_{m,n}: A(m_+) \wedge A(n_+) \to A(m_+ \wedge n_+)$$
natural in both arguments. At level $1$, we obtain
$$\mu_{1,1}: A(1_+) \times A(1_+) \to A(1_+)$$
which endows $A(1_+)$ with the structure of a pointed commutative monoid (where the basepoint is the absorbing element $0$).

This gives a forgetful functor:
$$-(1_+): \FAlg \to \CMonoid_*$$
sending an $\mathbb{F}_1$-algebra to its underlying pointed commutative monoid. This functor is right adjoint to the spherical monoid functor described below (\cite{connes2019absolute}, Proposition $3.3$).
\end{remark}

\begin{example}[Spherical monoid algebras]\label{ex:spherical-algebra}
Let $(M, \cdot, 1)$ be a pointed commutative monoid, meaning a commutative monoid with a distinguished element $0_M$ satisfying $0_M \cdot x = 0_M$ for all $x \in M$. The spherical monoid object $\mathbb{S}M$ from Definition \ref{def:spherical} admits a natural $\mathbb{F}_1$-algebra structure.

The multiplication $\mu: \mathbb{S}M \wedge \mathbb{S}M \to \mathbb{S}M$ is defined using the monoid operation on $M$:
\begin{align*}
\mathbb{S}M(m_+) \wedge \mathbb{S}M(n_+) &= (M \wedge m_+) \wedge (M \wedge n_+) \\
&\cong M \wedge M \wedge (m_+ \wedge n_+) \\
&\xrightarrow{\cdot_M \wedge \mathrm{id}} M \wedge (m_+ \wedge n_+) \\
&= \mathbb{S}M(m_+ \wedge n_+)
\end{align*}
where $\cdot_M: M \wedge M \to M$ is induced by the monoid operation on $M$ (sending $(x,y)$ to $x \cdot y$ and the basepoint to $0_M$).

The unit morphism $\eta: \FF \to \mathbb{S}M$ is determined by the multiplicative unit: $\eta_{1_+}(1) = 1_M \in M = \mathbb{S}M(1_+)$.

This assignment $M \mapsto \mathbb{S}M$ defines a fully faithful functor 
$$\mathbb{S}: \CMonoid_* \to \FAlg$$ 
from pointed commutative monoids to $\mathbb{F}_1$-algebras, providing the ``free'' $\mathbb{F}_1$-algebra construction on a pointed commutative monoid \cite{connes2019absolute}.
\end{example}

\begin{example}[Eilenberg-MacLane algebras]\label{ex:EM-algebra}
Let $R$ be a commutative semiring (a set with commutative operations $+$ and $\cdot$ satisfying ring axioms, but without requiring additive inverses). The Eilenberg-MacLane object $HR$ from Definition \ref{def:eilenberg-maclane} admits a natural $\mathbb{F}_1$-algebra structure.

The multiplication is defined levelwise by:
\begin{align*}
HR(m_+) \times HR(n_+) &\to HR((mn)_+)\\
((x_1, \ldots, x_m), (y_1, \ldots, y_n)) &\mapsto (x_i \cdot y_j)_{i \in \{1,\ldots,m\}, j \in \{1,\ldots,n\}}
\end{align*}
where the output is indexed by pairs $(i,j)$ and we identify $m_+ \wedge n_+ \cong (mn)_+$.

The unit morphism $\eta: \FF \to HR$ is determined by $\eta_{1_+}(1) = 1_R \in R = HR(1_+)$.

This assignment $R \mapsto HR$ defines a fully faithful functor 
$$H: \CSemiring \to \FAlg$$ 
from commutative semirings to $\mathbb{F}_1$-algebras \cite{connes2019absolute}.
\end{example}

\subsection{Characterization of algebra maps to Eilenberg-MacLane algebras}\label{subsec:char-alg-maps}

We now establish the analogue of Lemma \ref{lem:characterization} for $\mathbb{F}_1$-algebras, adding the natural multiplicativity conditions:

\begin{lemma}\label{lem:characterization-algebra}
Let $A$ be an $\mathbb{F}_1$-algebra, let $R$ be a commutative ring, and let $f: A \to HR$ be a morphism of $\mathbb{F}_1$-algebras. Then $f$ is equivalent to the data of a map $g: A(1_+) \to R$ satisfying:
\begin{enumerate}[label=(\roman*)]
\item \emph{(Pointed)} $g(*) = 0 \in R$.
\item \emph{(Additive)} If $c \in a \oplus b$, then $g(c) = g(a) + g(b)$ in $R$.
\item \emph{(Unit)} $g(1_A) = 1_R$, where $1_A$ is the multiplicative unit in $A(1_+)$.
\item \emph{(Multiplicative)} $g(a \cdot b) = g(a) \cdot_R g(b)$ for all $a, b \in A(1_+)$.
\end{enumerate}

Conversely, any map $g: A(1_+) \to R$ satisfying these four conditions extends uniquely to a morphism of $\mathbb{F}_1$-algebras $f: A \to HR$ such that $f|_{1_+} = g$.
\end{lemma}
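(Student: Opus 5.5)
The plan is to reduce everything to Lemma \ref{lem:characterization} and then check that the two extra conditions (iii) and (iv) correspond exactly to compatibility with the unit $\eta$ and the multiplication $\mu$.

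\emph{Forward direction.} Given an algebra morphism $f: A \to HR$, it is in particular a morphism of $\Gamma$-sets. By Lemma \ref{lem:characterization}, $g := f|_{1_+}$ is pointed and additive. For the unit, note that $f \circ \eta_A = \eta_{HR}$. Evaluating at $1 \in \FF(1_+)$ gives $g(1_A) = 1_R$. For multiplicativity, note that $f \circ \mu_A = \mu_{HR} \circ (f \wedge f)$. Restricting to the component $\mu_{1,1}$ of Remark \ref{remark:5.3}, we get $g(a\cdot b) = g(a)\cdot_R g(b)$. Here we use that $\mu^{HR}_{1,1}$ is the ring multiplication (Example \ref{ex:EM-algebra}), and that $f\wedge f$ restricted to levels $(1_+,1_+)$ is $g\times g$.

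\emph{Converse.} Let $g$ satisfy (i)--(iv). Lemma \ref{lem:characterization} gives a unique $\Gamma$-set morphism $f: A\to HR$ with $f|_{1_+}=g$, and it is given explicitly by
$$f|_{n_+}(x) = \bigl(g(A(p_1)x),\ldots,g(A(p_n)x)\bigr).$$
Uniqueness among algebra maps is then immediate, since every algebra map is in particular a $\Gamma$-set map. It remains to check that this $f$ respects $\eta$ and $\mu$.

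\emph{Unit.} Both $f\circ\eta_A$ and $\eta_{HR}$ are $\Gamma$-set maps $\FF \to HR$. By Proposition \ref{prop:spherical-functor}, with $\FF=\mathbb{S}\{0,1\}$, such a map is determined by the image of $1$ at level $1_+$. Both images equal $1_R$ by (iii).

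\emph{Multiplication.} We need the two maps $A\wedge A \to HR$, namely $f\circ\mu_A$ and $\mu_{HR}\circ(f\wedge f)$, to agree. By Remark \ref{remark:5.3}, it suffices to compare the bi-natural families of maps
$$A(m_+)\wedge A(n_+) \to HR(m_+\wedge n_+) = R^{mn}.$$
An element of $R^{mn}$ is determined by its $(i,j)$ coordinates, that is, by its images under $HR(p_{(i,j)})$, where $p_{(i,j)}$ is the projection $m_+\wedge n_+\to 1_+$. Note that $p_{(i,j)} = p_i\wedge p_j$ under $1_+\wedge 1_+ = 1_+$.

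Take $x\in A(m_+)$ and $y\in A(n_+)$. The $(i,j)$ coordinate of $f(\mu_{m,n}(x,y))$ is $g\bigl(A(p_i\wedge p_j)\mu_{m,n}(x,y)\bigr)$. By naturality of $\mu$ in both variables, this equals
$$g\bigl(\mu_{1,1}(A(p_i)x, A(p_j)y)\bigr) = g(A(p_i)x)\cdot g(A(p_j)y),$$
using (iv). On the other side, the $(i,j)$ coordinate of $\mu^{HR}_{m,n}(f(x),f(y))$ is $g(A(p_i)x)\cdot g(A(p_j)y)$ by the formula in Example \ref{ex:EM-algebra}. The two agree.

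The main obstacle is justifying the reduction from the morphism $\mu: A\wedge A\to A$ to its levelwise components $\mu_{m,n}$, together with the naturality statement used for the projections $p_i\wedge p_j$. I would handle this through the Day convolution universal property in Theorem \ref{thm:monoidal}: two maps out of $A\wedge A$ agree if and only if their composites with the canonical maps $A(m_+)\wedge A(n_+)\to (A\wedge A)(m_+\wedge n_+)$ agree. That is exactly what the coordinate computation above verifies.
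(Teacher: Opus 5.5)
Your proposal is correct and follows essentially the same route as the paper. Both arguments reduce the $\Gamma$-set part to Lemma~\ref{lem:characterization}, read off (iii) and (iv) from compatibility with $\eta$ and $\mu_{1,1}$, and for the converse compare the $(i,j)$-coordinates of $f(\mu_{m,n}(x,y))$ and $\mu^{HR}_{m,n}(f(x),f(y))$ using naturality of $\mu$ along $p_i\wedge p_j$; your appeals to the adjunction $\mathbb{S}\dashv -(1_+)$ for the unit and to the Day convolution universal property for reducing to the components $\mu_{m,n}$ make explicit two steps the paper leaves implicit.
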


\begin{proof}

Let $f: A \to HR$ be a morphism of $\mathbb{F}_1$-algebras. Define $g := f|_{1_+}: A(1_+) \to R$. 

By Lemma \ref{lem:characterization}, since $f$ is a morphism of the underlying $\Gamma$-sets, we know that $g$ is pointed and additive. We verify the additional conditions:

\emph{Unit property:} Since $f$ preserves the unit morphism, we have:
$$g(1_A) = f|_{1_+}(1_A) = 1_{HR} = 1_R.$$

\emph{Multiplicative property:} Since $f$ preserves multiplication, for any $a, b \in A(1_+)$:
\begin{align*}
g(a \cdot_A b) &= f|_{1_+}(\mu_A(a, b))\\
&= \mu_{HR}(f|_{1_+}(a), f|_{1_+}(b))\\
&= g(a) \cdot_R g(b)
\end{align*}
where $\mu_A$ and $\mu_{HR}$ denote the multiplications in $A$ and $HR$ respectively, and the second equality uses that $f$ is a morphism of $\mathbb{F}_1$-algebras.

By Lemma \ref{lem:characterization}, $f$ is completely determined by $g = f|_{1_+}$.

Now we prove the other direction. Given a map $g: A(1_+) \to R$ satisfying all four conditions, Lemma \ref{lem:characterization} provides a unique morphism of $\Gamma$-sets $f: A \to HR$ with $f|_{1_+} = g$. We verify that $f$ is a morphism of $\mathbb{F}_1$-algebras.

\emph{Unit preservation:} We have
$$f(1_A) = f|_{1_+}(1_A) = g(1_A) = 1_R = 1_{HR}.$$

\emph{Multiplication preservation:} We must show that for any $a \in A(m_+)$ and $b \in A(n_+)$:
$$f(\mu_A(a,b)) = \mu_{HR}(f(a), f(b))$$
where we suppress the levels in the notation for $\mu$.

By the construction in Lemma \ref{lem:characterization}, for any $x \in A(k_+)$:
$$f|_{k_+}(x) = (g(A(p_1)(x)), \ldots, g(A(p_k)(x))) \in R^{\oplus k} = HR(k_+).$$

The product $\mu_A(a,b) \in A(m_+ \wedge n_+) \cong A((mn)_+)$ satisfies, for any $(i,j) \in \{1,\ldots,m\} \times \{1,\ldots,n\}$:
$$A(p_{i,j})(\mu_A(a,b)) = A(p_i)(a) \cdot A(p_j)(b)$$
where $p_{i,j}: (mn)_+ \to 1_+$ is the projection onto the $(i,j)$-th coordinate, and the product on the right is at level $1$ in $A$.

Therefore, the $(i,j)$-th component of $f(\mu_A(a,b))$ is:
\begin{align*}
&g(A(p_{i,j})(\mu_A(a,b)))\\
&= g(A(p_i)(a) \cdot A(p_j)(b))\\
&= g(A(p_i)(a)) \cdot_R g(A(p_j)(b)) \quad \text{(since } g \text{ is multiplicative)}
\end{align*}

On the other hand, the $(i,j)$-th component of $\mu_{HR}(f(a), f(b))$ is:
$$(f(a))_i \cdot_R (f(b))_j = g(A(p_i)(a)) \cdot_R g(A(p_j)(b)).$$

Since these agree for all $(i,j)$, we have $f(\mu_A(a,b)) = \mu_{HR}(f(a), f(b))$, establishing that $f$ preserves multiplication.
\end{proof}

\subsection{Extension of scalars for $\FF$-algebras}\label{subsec:extension-alg}

We now define the extension of scalars for $\mathbb{F}_1$-algebras by incorporating the multiplicative structure:

\begin{definition}\label{def:extension-algebras}
Let $A$ be an $\mathbb{F}_1$-algebra. We define its \emph{extension of scalars to $\ZZ$} by
$$A \otimes_{\FF} \ZZ := \ZZ[A(1_+)] / \mathcal{R}$$
where:
\begin{itemize}
\item $\ZZ[A(1_+)]$ is the \emph{monoid ring} of the pointed commutative monoid $A(1_+)$. As an abelian group, this is the free abelian group on $A(1_+)$ (as in Definition \ref{def:extension-modules}). The multiplication is defined by:
$$[a] \cdot [b] := [a \cdot_A b]$$
extended bilinearly, where $\cdot_A$ denotes the monoid operation on $A(1_+)$ from the $\mathbb{F}_1$-algebra structure. The multiplicative unit is $[1_A]$.

\item $\mathcal{R}$ is the \emph{ideal} (not just subgroup) generated by the same relations as in Definition \ref{def:extension-modules}:
\begin{enumerate}[label=(\roman*)]
\item \emph{(Basepoint relation)} $[*] = 0$.
\item \emph{(Additivity relations)} $[a] + [b] = [c]$ for all $a, b, c \in A(1_+)$ with $c \in a \oplus b$.
\end{enumerate}
\end{itemize}

The quotient inherits a commutative ring structure from $\ZZ[A(1_+)]$.
\end{definition}

\begin{lemma}\label{lem:functoriality-algebra}
The assignment $A \mapsto A \otimes_{\FF} \ZZ$ extends to a functor
$$-\otimes_{\FF} \ZZ: \FAlg \to \CRing.$$
\end{lemma}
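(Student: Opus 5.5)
The plan is to follow the proof of Lemma \ref{lem:functoriality} closely, adding the multiplicative checks. Let $\varphi: A \to B$ be a morphism of $\mathbb{F}_1$-algebras. Its restriction $\varphi|_{1_+}: A(1_+) \to B(1_+)$ is a pointed map. By the same level-one argument as in Lemma \ref{lem:characterization-algebra} (taking $f=\varphi$, with target $B$ instead of $HR$), it is also a homomorphism of pointed commutative monoids: $\varphi(a\cdot_A b)=\varphi(a)\cdot_B\varphi(b)$ because $\varphi$ commutes with $\mu_{1,1}$, and $\varphi(1_A)=1_B$ because $\varphi\circ\eta_A=\eta_B$. It follows that the induced additive map $\ZZ[\varphi]:\ZZ[A(1_+)]\to\ZZ[B(1_+)]$, given by $[a]\mapsto[\varphi(a)]$, is a unital ring homomorphism of monoid rings. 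This is checked on generators: $\ZZ[\varphi]([a][b])=[\varphi(ab)]=[\varphi(a)][\varphi(b)]$, and the result extends by bilinearity.

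Next I will show that $\ZZ[\varphi]$ carries the ideal $\mathcal{R}_A$ into $\mathcal{R}_B$. Since $\ZZ[\varphi]$ is a surjection onto its image ring, I cannot simply say that images of ideals are ideals. The argument instead uses the fact that $\mathcal{R}_A$ consists of finite sums $\sum_k r_k\rho_k$ with $r_k\in\ZZ[A(1_+)]$ and $\rho_k$ generators. Then $\ZZ[\varphi](\sum r_k\rho_k)=\sum \ZZ[\varphi](r_k)\,\ZZ[\varphi](\rho_k)$, so it is enough that each $\ZZ[\varphi](\rho_k)\in\mathcal{R}_B$. For $\rho=[*]$ this holds because $\varphi(*)=*$. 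For $\rho=[a]+[b]-[c]$ with $c\in a\oplus b$, Lemma \ref{lem:preserve-sums} gives $\varphi(c)\in\varphi(a)\oplus\varphi(b)$, so $[\varphi(a)]+[\varphi(b)]-[\varphi(c)]$ is a generator of $\mathcal{R}_B$. Hence $\ZZ[\varphi]$ descends to a unital ring homomorphism $\varphi\otimes_{\FF}\ZZ: A\otimes_{\FF}\ZZ\to B\otimes_{\FF}\ZZ$. Identities and composition are preserved because they are preserved on the free level and the induced maps on quotients are unique, exactly as in Lemma \ref{lem:functoriality}.

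Before any of this, I also need to confirm that the quotient really is a commutative ring, so that the functor lands in $\CRing$. This is immediate: $\ZZ[A(1_+)]$ is commutative because $A(1_+)$ is a commutative monoid (Remark \ref{remark:5.3}), and quotients by ideals stay commutative and unital. I expect the main obstacle to be the step that $\varphi|_{1_+}$ respects the level-one product, since the multiplication on $A(1_+)$ is defined through the Day-convolution smash product. I will handle it by using the description in Remark \ref{remark:5.3}, where $\mu$ is given by natural maps $\mu_{m,n}$. The requirement that $\varphi$ preserve $\mu$ then reads $\varphi_{1_+}\circ\mu^A_{1,1}=\mu^B_{1,1}\circ(\varphi_{1_+}\wedge\varphi_{1_+})$, which is exactly multiplicativity at level one.
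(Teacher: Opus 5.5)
Your proposal is correct and follows the same route as the paper: restrict $\varphi$ to level one to get a unital monoid homomorphism $A(1_+)\to B(1_+)$, so that $\ZZ[\varphi]$ is a ring homomorphism sending the generating relations of $\mathcal{R}_A$ into $\mathcal{R}_B$, and then descend to the quotients. Your explicit check that the whole ideal lands in $\mathcal{R}_B$, via $\ZZ[\varphi]\bigl(\sum_k r_k\rho_k\bigr)=\sum_k \ZZ[\varphi](r_k)\,\ZZ[\varphi](\rho_k)$, is more careful than the paper, which only cites Lemma \ref{lem:functoriality} (where $\mathcal{R}$ is merely a subgroup) for this step; the reason this check is needed is that $\ZZ[\varphi]$ need not be surjective onto $\ZZ[B(1_+)]$, not that it surjects onto its image.
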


\begin{proof}
Let $\varphi: A \to B$ be a morphism of $\mathbb{F}_1$-algebras. By Lemma \ref{lem:functoriality}, the induced map $\varphi|_{1_+}: A(1_+) \to B(1_+)$ extends to a group homomorphism $\ZZ[\varphi]: \ZZ[A(1_+)] \to \ZZ[B(1_+)]$ that descends to the quotient.

We verify that $\ZZ[\varphi]$ is also a ring homomorphism. Since $\varphi$ preserves multiplication in the $\mathbb{F}_1$-algebras, we have for $a, b \in A(1_+)$:
$$\ZZ[\varphi]([a] \cdot [b]) = \ZZ[\varphi]([a \cdot_A b]) = [\varphi(a \cdot_A b)] = [\varphi(a) \cdot_B \varphi(b)] = [\varphi(a)] \cdot [\varphi(b)] = \ZZ[\varphi]([a]) \cdot \ZZ[\varphi]([b]).$$

Similarly, since $\varphi$ preserves units:
$$\ZZ[\varphi]([1_A]) = [\varphi(1_A)] = [1_B].$$

Therefore, $\ZZ[\varphi]$ descends to a ring homomorphism:
$$\varphi \otimes_{\FF} \ZZ: A \otimes_{\FF} \ZZ \to B \otimes_{\FF} \ZZ.$$

Functoriality follows from the corresponding property in Lemma \ref{lem:functoriality}.
\end{proof}

\subsection{The adjunction for $\FF$-algebras}

We now establish the main result, showing that extension of scalars for algebras is left adjoint to the Eilenberg-MacLane functor:

\begin{theorem}[Theorem \ref{thm:main-C}]\label{thm:extension-adjunction-algebras}
The extension of scalars functor $-\otimes_{\FF} \ZZ: \FAlg \to \CRing$ is left adjoint to the Eilenberg-MacLane functor $H: \CRing \to \FAlg$. That is, there is a natural bijection
$$\Hom_{\FAlg}(A, HR) \cong \Hom_{\CRing}(A \otimes_{\FF} \ZZ, R)$$
for all $\mathbb{F}_1$-algebras $A$ and commutative rings $R$.
\end{theorem}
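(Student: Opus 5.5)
The plan is to mirror the proof of Theorem \ref{thm:extension-adjunction-modules}, with Lemma \ref{lem:characterization-algebra} playing the role that Lemma \ref{lem:characterization} played there. Call a map $g: A(1_+) \to R$ satisfying conditions (i)--(iv) of Lemma \ref{lem:characterization-algebra} a \emph{pointed additive multiplicative unital map}. That lemma identifies $\Hom_{\FAlg}(A, HR)$ with the set of such maps. It therefore suffices to show that $\Hom_{\CRing}(A \otimes_{\FF} \ZZ, R)$ is in natural bijection with the same set, via precomposition with the canonical map $\iota: A(1_+) \to A \otimes_{\FF} \ZZ$, $a \mapsto [a]$.

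First, I check that $\iota$ is itself pointed, additive, unital and multiplicative.
\begin{itemize}
\item Pointedness and additivity hold because the basepoint and additivity relations lie in $\mathcal{R}$, exactly as in the module case.
\item Unitality holds because the unit of the monoid ring is $[1_A]$.
\item Multiplicativity holds because $[a]\cdot[b] = [a\cdot_A b]$ already in $\ZZ[A(1_+)]$, and the quotient map is a ring map.
\end{itemize}
Consequently, for any ring homomorphism $h: A\otimes_{\FF}\ZZ \to R$, the composite $h\circ\iota$ satisfies (i)--(iv). This defines $\Psi$, which sends $h$ to the algebra map $A \to HR$ obtained from $h\circ\iota$ via Lemma \ref{lem:characterization-algebra}.

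Conversely, given $f: A \to HR$, I set $g = f|_{1_+}$. By Lemma \ref{lem:characterization-algebra}, $g$ satisfies (i)--(iv). I then extend $g$ linearly to $\tilde h: \ZZ[A(1_+)] \to R$.
\begin{itemize}
\item Conditions (iii) and (iv) say exactly that $\tilde h$ is a unital ring homomorphism out of the monoid ring: it sends $[1_A]$ to $1_R$, and it satisfies $\tilde h([a][b]) = g(ab) = g(a)g(b)$, which extends bilinearly.
\item Conditions (i) and (ii) say that $\tilde h$ kills the generators of $\mathcal{R}$.
\end{itemize}
The one point needing care is that $\mathcal{R}$ is now the \emph{ideal}, not merely the subgroup, generated by these relations. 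But the kernel of a ring homomorphism is an ideal, so it contains the ideal generated by the relations as soon as it contains the generators. Hence $\tilde h$ descends to a ring homomorphism $h: A\otimes_{\FF}\ZZ \to R$, and I set $\Phi(f) = h$. It is the unique ring map with $h\circ\iota = g$, because the elements $[a]$ generate the quotient additively.

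The two constructions are mutually inverse. Indeed, $\Phi\Psi(h)$ agrees with $h$ on the generators $[a]$, and $\Psi\Phi(f)$ has first level $g = f|_{1_+}$, so it equals $f$ by the uniqueness clause of Lemma \ref{lem:characterization-algebra}. For naturality in $A$, take $\varphi: A\to B$: both routes send $f$ to the map determined on generators by $a\mapsto g(\varphi(a))$, using the description of $\varphi\otimes_{\FF}\ZZ$ in Lemma \ref{lem:functoriality-algebra}. Naturality in $R$ is immediate, since postcomposing with a ring map $R\to R'$ commutes with restriction to level $1$.

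I expect the main subtlety to lie not in the adjunction bookkeeping but in the ingredients it relies on. The first is that $H$ genuinely lands in $\FAlg$, and that an algebra map $f$ is determined by, and recoverable from, its level-$1$ multiplicative data; this is the content of Lemma \ref{lem:characterization-algebra}, which I assume. The second is that passing from the subgroup to the ideal introduces no new obstruction, which the kernel argument above settles.
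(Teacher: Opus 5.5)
Your proposal is correct and follows essentially the same route as the paper: it uses Lemma \ref{lem:characterization-algebra} to reduce to pointed additive multiplicative unital maps $g\colon A(1_+)\to R$, shows that $\iota$ is universal among such maps, and obtains that universality by extending $g$ to a ring map on the monoid ring and descending through $\mathcal{R}$. Where the paper only refers back to Theorem \ref{thm:extension-adjunction-modules}, you supply the details: the kernel argument showing that killing the generators kills the whole ideal $\mathcal{R}$, and explicit checks that $\Phi,\Psi$ are inverse and natural.
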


\begin{proof}
The proof follows the same structure as Theorem \ref{thm:extension-adjunction-modules}, with modifications to account for the multiplicative structure.

The canonical map
\begin{align*}
\iota: A(1_+) &\to A \otimes_{\FF} \ZZ \\
a &\mapsto [a]
\end{align*}
is now a pointed additive multiplicative unital map (satisfying all four conditions of Lemma \ref{lem:characterization-algebra}):
\begin{itemize}
\item \emph{Pointed:} $\iota(*) = [*] = 0$ by the basepoint relation.
\item \emph{Additive:} If $c \in a \oplus b$, then $\iota(c) = [c] = [a] + [b] = \iota(a) + \iota(b)$ by the additivity relations.
\item \emph{Unit:} $\iota(1_A) = [1_A] = 1$ in $A \otimes_{\FF} \ZZ$ (the multiplicative identity in the ring).
\item \emph{Multiplicative:} $\iota(a \cdot b) = [a \cdot b] = [a] \cdot [b] = \iota(a) \cdot \iota(b)$ by the ring structure on the monoid ring.
\end{itemize}

This gives a commutative diagram:
\begin{center}
\begin{tikzcd}[column sep=large]
A(1_+) \arrow[r, "\iota"] \arrow[dr, dashed] & A \otimes_{\FF} \ZZ \arrow[d, dashed] \\
& R
\end{tikzcd}
\end{center}

where any pointed additive multiplicative unital map from $A(1_+)$ to $R$ factors uniquely through $\iota$, as we show below.

Define $\Phi$ as follows:
$$\Phi: \Hom_{\FAlg}(A, HR) \to \Hom_{\CRing}(A \otimes_{\FF} \ZZ, R)$$

Given $f: A \to HR$, let $g := f|_{1_+}: A(1_+) \to R$. By Lemma \ref{lem:characterization-algebra}, $g$ is a pointed additive multiplicative unital map.

The ring homomorphism:
\begin{align*}
\tilde{h}: \ZZ[A(1_+)] &\to R \\
\sum_{a \in A(1_+)} n_a \cdot [a] &\mapsto \sum_{a \in A(1_+)} n_a \cdot g(a)
\end{align*}
preserves both addition and multiplication (since $g$ is a monoid homomorphism), and vanishes on the ideal $\mathcal{R}$ by the same argument as in Theorem \ref{thm:extension-adjunction-modules}.

Therefore, $\tilde{h}$ descends to a unique ring homomorphism $h: A \otimes_{\FF} \ZZ \to R$ such that $h \circ \iota = g$. We define $\Phi(f) := h$.

On the other direction, define:
$$\Psi: \Hom_{\CRing}(A \otimes_{\FF} \ZZ, R) \to \Hom_{\FAlg}(A, HR)$$

Given $h: A \otimes_{\FF} \ZZ \to R$, define $g := h \circ \iota: A(1_+) \to R$. Then the composition $g$ is still a pointed additive multiplicative unital map, which by Lemma \ref{lem:characterization-algebra} extends uniquely to a morphism of $\mathbb{F}_1$-algebras $f: A \to HR$ with $f|_{1_+} = g$. We define $\Psi(h) := f$.

The verification that $\Phi$ and $\Psi$ are inverses, and the proof of naturality of the correspondence, follow the same arguments as in Theorem \ref{thm:extension-adjunction-modules}.
\end{proof}

\subsection{Examples}\label{subsec:examples-alg}

\begin{example}[Extension of $\FF$ as an algebra]\label{ex:f1-algebra-extension}
As an $\mathbb{F}_1$-algebra (not just as an $\mathbb{F}_1$-module), we have
$$\FF \otimes_{\FF} \ZZ \cong \ZZ$$
as a commutative ring. 

The underlying $\Gamma$-set gives $\ZZ$ as an abelian group (Example \ref{ex:f1-extension}). The multiplicative structure comes from $\FF(1_+) = \{0, 1\}$ with the usual multiplication ($0 \cdot x = 0$, $1 \cdot 1 = 1$). The monoid ring $\ZZ[\{0,1\}]$ with the basepoint relation $[0] = 0$ gives:
$$\FF \otimes_{\FF} \ZZ = \ZZ[1] \cong \ZZ$$
where $[1]$ generates $\ZZ$ both additively and multiplicatively (as $1_{\ZZ}$).

This confirms that extending scalars from $\FF$ to $\ZZ$ produces $\ZZ$ itself as a ring, serving as the base ring for classical algebraic geometry.
\end{example}

\begin{example}[Spherical monoid algebras]\label{ex:spherical-algebra-extension}
Let $M$ be a pointed commutative monoid. Then
$$(\mathbb{S}M) \otimes_{\FF} \ZZ \cong \ZZ[M]$$
is the integral monoid ring of $M$ with the basepoint removed.

In particular, when $M$ is a monoid without a distinguished absorbing element, we may first freely adjoin one to make it pointed, then take the monoid ring. This recovers Deitmar's construction: 
$$M \otimes_{\FF(Deitmar)} \ZZ \cong \ZZ[M].$$

This shows that our construction generalizes Deitmar's base change functor: for spherical monoid algebras (which encode purely multiplicative structure), extension of scalars recovers the classical monoid ring construction.
\end{example}

\begin{example}[Eilenberg-MacLane algebras]\label{ex:EM-algebra-extension}
For a commutative ring $R$, we have
$$(HR) \otimes_{\FF} \ZZ \cong R.$$

This follows because in $HR$, every $n$-ary sum has a unique representative: $a_1 \oplus \cdots \oplus a_n = \{a_1 +_R \cdots +_R a_n\}$. Therefore, the additivity relations in Definition \ref{def:extension-algebras} impose:
$$[a] + [b] = [a +_R b]$$
for all $a, b \in R$. Combined with the multiplicative structure $[a] \cdot [b] = [a \cdot_R b]$ from the monoid ring, the quotient $\ZZ[R]/\mathcal{R}$ is isomorphic to $R$ itself. The semiring homomorphism $R \to (HR) \otimes_{\FF} \ZZ$ sending $a \mapsto [a]$ is an isomorphism.

This shows that the extension of scalars functor is the identity on Eilenberg-MacLane algebras of commutative rings, confirming that the adjunction $(-\otimes_{\FF} \ZZ) \dashv H$ provides a reflection of $\FAlg$ onto $\CRing$.

More generally, if $R$ is a commutative semiring (not necessarily a ring), then
$$(HR) \otimes_{\FF} \ZZ \cong R^{\mathrm{gp}}$$
where $R^{\mathrm{gp}}$ denotes the \emph{group completion} or \emph{ring completion} of $R$---the universal commutative ring receiving a semiring homomorphism from $R$. This can be constructed as:
$$R^{\mathrm{gp}} = \ZZ[R] / \langle [a] + [b] = [a +_R b], \, [a] \cdot [b] = [a \cdot_R b], \, [0_R] = 0, \, [1_R] = 1 \rangle.$$
The same calculation as above shows that our construction recovers this universal property.
\end{example}

\begin{example}[Quotients of Eilenberg-MacLane algebras]
Consider the quotient $X = H\ZZ / H(n\ZZ)$ as an $\mathbb{F}_1$-algebra (with ring structure from $\ZZ$). Then:
$$X \otimes_{\FF} \ZZ \cong \ZZ/n\ZZ$$
as a commutative ring, by the same calculation as Example \ref{ex:quotient-extension}, where now the ring structure is preserved through the quotient.
\end{example}

\subsection{Relation to other frameworks}\label{subsec:comparison}

Our extension of scalars functor relates to several existing approaches to $\mathbb{F}_1$-geometry, generalizing some constructions while remaining complementary to others.

\emph{Deitmar's monoid schemes.} Deitmar \cite{deitmar2006schemesf1} defines $\mathbb{F}_1$-algebras as (pointed) commutative monoids and constructs base change via the integral monoid ring $M \mapsto \ZZ[M]$. Example \ref{ex:spherical-algebra-extension} shows that our functor recovers this construction. The following adjunction diagram illustrates this relationship:

\begin{center}
\begin{tikzcd}
\CMonoid_* \arrow[r,"\mathbb{S}"] \arrow[rr, bend left, "\text{free}"]  &  \FAlg \arrow[r,"-\otimes_{\mathbb{F}_1} \mathbb{Z}"] & \CRing  \\
\CMonoid_*    &  \FAlg \arrow[l,"-(1_+)"]  & \CRing \arrow[l,"H"] \arrow[ll, bend left, "\text{forget}"]
\end{tikzcd}
\end{center}

The horizontal arrows form adjoint pairs: $\mathbb{S} \dashv -(1_+)$ (Remark \ref{remark:5.3}) and $(-\otimes_{\FF}\ZZ) \dashv H$ (Theorem \ref{thm:extension-adjunction-algebras}). The bent arrows represent the composite functors, with the top path providing the (free) integral group ring on a pointed monoid, and the bottom path extracting the monoid of units, again an adjoint pair.

\begin{remark}
    This diagram confirms that our construction extends Deitmar's by restricting to spherical monoid algebras. However, our framework handles more general $\mathbb{F}_1$-algebras that encode additive information beyond purely multiplicative monoid structure, such as quotients $H\ZZ/H(n\ZZ)$ (Example \ref{ex:quotient-extension}).
\end{remark}

\emph{Borger's $\lambda$-rings.} Borger \cite{borger2009lambdaringsfieldelement} interprets $\mathbb{F}_1$-algebras as $\lambda$-rings (rings equipped with operations $\lambda^n$ satisfying axioms from exterior powers), with the Witt vector functor providing a right adjoint to the forgetful functor. His framework can be summarized by the following diagram of adjunctions, structurally parallel to ours:

\begin{center}
\begin{tikzcd}
\CMonoid_* \arrow[r, "\ZZ"] \arrow[rr, bend left, "\text{free}"]  &  \lambda\text{-}\mathbf{Ring} \arrow[r, "\text{forget}"] & \CRing  \\
\CMonoid_*    &  \lambda\text{-}\mathbf{Ring}  \arrow[l,"\text{rank }1"]  & \CRing \arrow[l,"W"] \arrow[ll, bend left, "\text{forget}"]
\end{tikzcd}
\end{center}

where on the top side, $\ZZ$ associates a pointed monoid $M$ with $\ZZ[M]$ naturally equipped with a $\lambda$-ring structure. On the bottom side, the cofree $\lambda$-ring is given by the ring of Witt vectors $W(R)$, whose rank $1$ elements (i.e., elements $x$ with $\lambda^n(x)=0$ for all $n > 1$) naturally form a pointed monoid structure. As in our framework, the horizontal arrows form adjoint pairs.

Both frameworks provide adjunctions relating $\mathbb{F}_1$-structures to rings, but they capture different aspects:
\begin{itemize}
\item \emph{Borger's approach:} $\lambda$-operations encode Frobenius lifts and descent data, emphasizing arithmetic properties and connections to number theory.
\item \emph{Our approach:} Hyper-operations encode multivalued $n$-ary addition and higher associativity coherence, emphasizing categorical properties and connections to algebraic topology.
\end{itemize}
addressing different aspects of the ``geometry over $\mathbb{F}_1$'' with different applications.

\emph{Beardsley's plasmas.} Beardsley and Nakamura \cite{beardsley2024} introduce \emph{plasmas} (weakly unital commutative hypermagmas) to encode the binary hyper-additive structure of $\mathbb{F}_1$-modules, establishing an adjunction that captures the first two levels of $\Gamma$-sets. Our framework reveals that plasmas are in fact part of a hierarchy: an object $M$ equipped with compatible $n$-ary hyper-operations for all $n \geq 2$ satisfying generalized associativity (Theorem \ref{thm:gen-assoc}) can be embedded into $\FMod$ via
$$HM(n_+):=\left\{ (x_S)_{S \subseteq [n]}\in M^{\mathcal{P}(n)}: x_{\emptyset}=0, \, x_{\cup_i S_i} \in \bigcup \bigoplus_i x_{S_i} \right\}$$
with $HM(f)((x_S)_S) := (x_{f^{-1}(T)})_T$ for morphisms $f \in \Hom_{\Gamma^{\text{op}}}(n_+,m_+)$. This hints at a hierarchy
$$\text{Plasmas} \subseteq \text{Hyper-$\infty$-monoids} \subseteq \Gamma\text{-sets}$$
where plasmas capture binary operations, ``hyper-$\infty$-monoids'' capture all $n$-ary operations with generalized associativity, and $\Gamma$-sets provide the full functorial structure. Our extension of scalars can be understood as the left adjoint to the embedding of hyper-monoids into $\FMod$, universally strictifying the entire family of multivalued operations into classical addition.

These comparisons show that our framework via $\Gamma$-sets provides a unifying perspective that generalizes several existing approaches. The category $\FAlg$ of $\mathbb{F}_1$-algebras serves as a natural candidate for the foundation of absolute algebraic geometry: it simultaneously absorbs the category of pointed monoids (via $\mathbb{S}$, capturing purely multiplicative structure) and the category of commutative rings (via $H$, capturing both additive and multiplicative structure), while also accommodating intermediate objects with rich hyper-additive structure. This flexibility, combined with the extension of scalars mechanism established in this paper, provides an appropriate framework for developing geometric theories over $\mathbb{F}_1$.

\section{Geometric interpretation and future directions}\label{sec:applications}

Recall from classical algebraic geometry that for any commutative ring $R$, the base change functor $-\otimes_{\ZZ} R$ induces a functor between affine schemes:
$$\mathbf{AffSch}_{\ZZ} \to \mathbf{AffSch}_{R}$$
given by $\Spec A \mapsto \Spec(A \otimes_{\ZZ} R)$.

Our construction provides an analogous functor for absolute algebraic geometry. In forthcoming work \cite{xu-spec}, we develop a theory of affine schemes over $\mathbb{F}_1$ (or absolute affine schemes) by defining, for a commutative $\mathbb{F}_1$-algebra $A$, a suitable prime spectrum
$$\Spec A := (|\Spec A|, \mathcal{O}_A)$$
such that there is an anti-equivalence of categories:
$$\Spec: \FAlg^{\mathrm{op}} \xrightarrow{\sim} \mathbf{AffSch}_{\mathbb{F}_1}.$$

The adjunction established in Theorem \ref{thm:extension-adjunction-algebras}:
$$-\otimes_{\FF} \ZZ: \FAlg \leftrightarrows \CRing : H$$
hence provides a base change mechanism for affine schemes over $\FF$ by contravariance of $\Spec$.\\

This construction provides the algebraic foundations for the Connes-Consani program of absolute algebraic geometry \cite{connes2019absolute, connes}. While they introduce $\Gamma$-sets as $\mathbb{F}_1$-modules and develop the geometric framework, an explicit extension of scalars functor is not provided. Our results fill this gap: the law of generalized associativity (Theorem \ref{thm:gen-assoc}) explains why the hyper-additive structure of $\Gamma$-sets can be strictified despite lacking classical associativity, and the adjunction $(-\otimes_{\FF}\ZZ) \dashv H$ (Theorems \ref{thm:extension-adjunction-modules} and \ref{thm:extension-adjunction-algebras}) provides a base change mechanism. 

We hope that this work provides useful foundations for further developments in absolute algebraic geometry and its applications.

\vspace{1em}

\bibliographystyle{amsalpha}  % or amsplain, alpha, etc.
\bibliography{ref}

\end{document}